\newlength\DX
\newlength\DY
\newcommand{\sca}[2]{\langle #1 | #2\rangle}
\newcommand{\nr}[1]{\left\Vert #1\right\Vert}
\newcommand{\abs}[1]{\left\vert #1\right\vert}
\newcommand{\Nsp}{\mathbb{N}}
\newcommand{\Rsp}{\mathbb{R}}
\newcommand{\NCov}{\mathcal{N}}
\newcommand{\id}{\mathrm{id}}
\newcommand{\DD}{\mathrm{D}}
\newcommand{\Class}{\mathcal{C}}
\newcommand{\Wass}{\mathrm{W}}
\newcommand{\dd}{\mathrm{d}}
\renewcommand{\L}{\mathrm{L}}
\newcommand{\Lip}{\mathrm{Lip}}
\newcommand{\diam}{\mathrm{diam}}
\newcommand{\eps}{\varepsilon}
\newcommand{\Prob}{\mathcal{P}}
\newcommand{\tr}{\mathrm{tr}}
\newcommand{\Esp}{\mathbb{E}}
\newcommand{\Kant}{\mathcal{K}}
\newcommand{\spt}{\mathrm{spt}}
\newcommand{\osc}{\mathrm{osc}}
\newcommand{\Haus}{\mathcal{H}}
\newcommand{\sign}{\mathrm{sign}}
\newcommand*{\Tan}{\mathcal{T}}
\newcommand{\Loss}{\mathcal{L}}
\newmdtheoremenv{theo}{Theorem}
\newmdtheoremenv{coro}{Corollary}
\newtheorem{theorem}{Theorem}[section]
\newtheorem*{theoremst}{Theorem}
\newtheorem{corollary}[theorem]{Corollary}
\newtheorem{lemma}[theorem]{Lemma}
\newtheorem{problem}[theorem]{Problem}
\newtheorem*{propositionst}{Proposition}
\theoremstyle{definition}
\newtheorem{remark}{Remark}[section]
\newtheorem{example}[theorem]{Example}
\title[Stability of the Pushforward by an Optimal Transport Map]{Quantitative Stability of the Pushforward Operation by an Optimal Transport Map}
\author{Guillaume Carlier}
\address{Ceremade, Univ. Paris-Dauphine PSL, 75775 Paris \and Mokaplan, Inria Paris}
\email{carlier@ceremade.dauphine.fr}
\author{Alex Delalande}
\address{Lagrange Mathematics and Computing Research Center, 75007, Paris, France}
\email{delalande.alex@gmail.com}
\author{Quentin Mérigot}
\address{Université Paris-Saclay, CNRS, Laboratoire de mathématiques d’Orsay, 91405, Orsay, France \and Institut universitaire de France}
\email{quentin.merigot@universite-paris-saclay.fr}
\begin{document}

\begin{abstract}
   %The pushforward of a measure by a given optimal transport map, or more generally by 
   %a given optimal transport plan, is a natural quantity of optimal transport. 
   %In this work, 
   We study the quantitative stability of the mapping that to a measure associates its 
   pushforward measure by a fixed (non-smooth) optimal transport map. We exhibit a tight 
   Hölder-behavior for 
   this operation under minimal assumptions. 
   Our proof essentially relies on a new bound that quantifies the size of 
   the singular sets of a convex and Lipschitz continuous function on a bounded domain.
\end{abstract} 

\maketitle

\textbf{Keywords:} Optimal transport, Pushforward measure, Singularities of convex functions.

\smallskip

\textbf{2020 Mathematics Subject Classification:} 49Q22, 49K40, 26B05.

\section{Introduction}
\label{sec:introduction}

The optimal transport problem is a two-century old foundational 
optimization problem of optimal mass allocation in geometric domains 
\cite{Monge}. The theoretical 
study of this problem has allowed to define a natural geometry on spaces of 
probability measures that %is \emph{lifted} from the geometric domains 
%under study. 
%In turn, this geometry
offers precious tools for tackling both theoretical and numerical questions 
 involving probability measures \cite{villani2008optimal,ambrosio2008gradient, santambrogio2015optimal, comp_OT}. 
 The main feature of this geometry 
is the \emph{Wasserstein distance}: on the set $\Prob_2(\Rsp^d)$ of 
probability measures with finite second moment over $\Rsp^d$, the 
($2$-)Wasserstein distance between two measures $\rho, \mu \in \Prob_2(\Rsp^d)$, 
denoted $\Wass_2(\rho, \mu)$, is 
defined as the 
square-root of the value of the following minimization problem:
\begin{equation}
    \label{eq:ot-problem}
    \min_{\gamma \in \Gamma(\rho, \mu)} \int_{\Rsp^d \times \Rsp^d} \nr{x-y}^2 \dd \gamma(x,y),
\end{equation}
where $\Gamma(\rho, \mu)$ denotes the set of \emph{transport plans} 
or \emph{couplings} between $\rho$ and $\mu$, that is the set of probability 
measures over $\Rsp^d \times \Rsp^d$ with first marginal $\rho$ and second 
marginal $\mu$. Endowed with the Wasserstein distance, the metric space 
$(\Prob_2(\Rsp^d), \Wass_2)$ is a geodesic space referred to as the 
\emph{Wasserstein space}. In this space,  the (constant-speed) geodesics 
connecting 
two measures $\rho$ 
and $\mu$ in $\Prob_2(\Rsp^d)$ are given by the paths 
$\left( ((1-t)p_1 + t p_2)_\# \gamma \right)_{t \in [0, 1]}$ for any $\gamma \in \Gamma(\rho, \mu)$ 
that minimizes \eqref{eq:ot-problem}, where $p_1 : (x,y) \mapsto x$ and 
$p_2 : (x, y) \mapsto y$ are the projections onto the first and second 
coordinates respectively and where $f_\# \nu$ denotes the image measure of a 
measure $\nu$ under a map $f$. 
Interestingly, the Wasserstein space has found a physically-relevant 
pseudo-Riemannian structure, which 
has been leveraged to describe some well known 
evolution PDEs (such as the Fokker-Planck or porous medium equations) 
as gradient flows of some energy functionals on the space of
probability distributions \cite{Otto1998, JKO, otto2001geometry,ambrosio2008gradient}. 
In this formal Riemannian interpretation (formal because $(\Prob_2(\Rsp^d), \Wass_2)$ 
is not locally homeomorphic to a Euclidean space or even a Hilbert space), 
the geometric tangent cone $\Tan_\rho \Prob_2(\Rsp^d)$ to 
$\Prob_2(\Rsp^d)$ at a 
measure $\rho \in \Prob_2(\Rsp^d)$ can be described as the closure 
of the set 
\begin{equation*}
    %\label{eq:tangent-cone}
    \{ (p_1, \lambda(p_2 - p_1))_\# \gamma \mid \lambda > 0, \gamma \in \Prob_2(\Rsp^d \times \Rsp^d), 
(p_1)_\# \gamma = \rho, \spt(\gamma) \subset \partial \phi, \text{ $\phi$ convex}\}
\end{equation*}
with respect to an appropriately chosen Riemannian metric (see Chapter 12 
of \cite{ambrosio2008gradient}). In this expression, $\spt(\gamma)$ denotes 
the support of $\gamma$ and $\partial \phi$ denotes the 
subdifferential of the (proper and continuous) convex function 
$\phi : \Rsp^d \to \Rsp \cup \{ \infty \}$, 
that is the set
$$ \partial \phi = \{ (x,y) \in \Rsp^d \times \Rsp^d \mid \phi(x) + \phi^*(y) = \sca{x}{y} \},$$
where $\phi^*(\cdot) = \sup_{z \in \Rsp^d} \sca{z}{\cdot} - \phi(z)$ 
corresponds to the 
convex conjugate or Legendre transform of $\phi$. 

\subsection{Problem statement.} From above, it appears that the 
\emph{directions} of the elements of the tangent cone 
$\Tan_\rho \Prob_2(\Rsp^d)$ to 
$\Prob_2(\Rsp^d)$ at a probability measure $\rho$ are prescribed with 
convex functions. 
In the spirit of building a Riemannian logarithmic map, being given a 
new measure 
$\mu \in \Prob_2(\Rsp^d)$, one may wonder what are the 
possible \emph{directions} $\phi$ of the 
elements of 
$\Tan_\rho \Prob_2(\Rsp^d)$ that support the 
Wasserstein geodesics connecting $\rho$ to $\mu$. These 
can be recovered from the convex functions $\phi$ or $\psi^*$ that solve the 
following Kantorovich dual 
problems, which essentially correspond to the convex dual problems of 
\eqref{eq:ot-problem} 
(see e.g. Particular Case 5.16 in \cite{villani2008optimal}):
\begin{equation}
    \label{eq:kanto-dual-w2}
    \min_{\phi : \Rsp^d \to \Rsp \cup \{ \infty \}} 
    \int_{\Rsp^d} \phi \dd \rho + \int_{\Rsp^d} \phi^* \dd \mu = 
    \min_{\psi : \Rsp^d \to \Rsp \cup \{ \infty \}} 
    \int_{\Rsp^d} \psi^* \dd \rho + \int_{\Rsp^d} \psi \dd \mu.
\end{equation}
Conversely, in the spirit of building a Riemannian exponential map, 
being given a \emph{direction} from a convex function 
$\phi : \Rsp^d \to \Rsp \cup \{ \infty \}$, one may wonder what are 
the possible $t=1$ 
endpoints of the geodesics starting from $\rho$ and with initial 
velocities \emph{directed} by $\phi$, that is of the form 
$(p_1, p_2 - p_1)_\# \gamma$ for a coupling $\gamma$ with 
first marginal equal to $\rho$ and with support 
included in $\partial \phi$. Here, the answer is simple and the corresponding 
endpoint is the measure $(p_2)_\# \gamma$. Note that whenever $\phi$ is 
differentiable $\rho$-almost-everywhere, there is only one coupling $\gamma$ 
with first marginal equal to $\rho$ and support in $\partial \phi$: 
it is given by the 
coupling $\gamma = (\id, \nabla \phi)_\# \rho$ and in this case $\nabla \phi$ 
corresponds to the optimal transport map from $\rho$ to 
$\mu = (\nabla \phi)_\# \rho$ in the 
Brenier sense \cite{Brenier89}. 
In this setting, the \emph{exponential map} 
from the base point $\rho$ applied to the direction $\phi$ reduces 
to the pushforward measure $(\nabla \phi)_\# \rho$.

In this article, we are 
concerned with the quantitative stability with respect to the base point 
of the above-described \emph{exponential mapping} (or pushforward operation) 
in a fixed direction. Namely, we investigate the following 
problem:

\begin{problem}
    \label{pb:statement}
    Let $\phi : \Rsp^d \to \Rsp \cup \{ \infty\}$ be a 
    \emph{fixed}, proper and continuous convex function. Let
    $\rho, \tilde{\rho} \in \Prob_2(\Rsp^d)$ and consider $\gamma, \tilde{\gamma} 
    \in \Prob_2(\Rsp^d \times \Rsp^d)$ that are such that 
    $(p_1)_\# \gamma = \rho$,
    $(p_1)_\# \tilde{\gamma} = \tilde{\rho}$, 
    $\spt(\gamma) \subset \partial \phi$ and 
    $\spt(\tilde{\gamma}) \subset \partial \phi$. Under what conditions on 
    $\phi, \rho, \tilde{\rho}$ and how can one upper 
    bound $\Wass_2((p_2)_\# \gamma, (p_2)_\# \tilde{\gamma})$ in terms of 
    $\Wass_2(\rho, \tilde{\rho})$?
\end{problem}    

As mentioned above, whenever the function $\phi$ is differentiable $\rho$- 
and $\tilde{\rho}$-almost-everywhere in Problem~\ref{pb:statement}, the 
question it raises becomes that of upper bounding 
$\Wass_2((\nabla \phi)_\# \rho, (\nabla \phi)_\# \tilde{\rho})$ in terms of 
$\Wass_2(\rho, \tilde{\rho})$, which is the question of the quantitative 
stability of the pushforward operation by an optimal transport map.

\subsection{Motivations.} While of 
a theoretical nature, Problem~\ref{pb:statement} finds its relevance 
in several applied contexts. In order to motivate our study, we mention some 
of these contexts in what follows.

%\noindent {\it (i)} \textit{Numerical resolution of the Kantorovich dual.}
\subsubsection{Numerical resolution of the Kantorovich dual.}
Many numerical methods that aim at solving the optimal transport problem 
\eqref{eq:ot-problem} between two measures $\rho$ and $\mu$ in $\Prob_2(\Rsp^d)$ 
rely on the dual problems exposed in \eqref{eq:kanto-dual-w2} (see 
\cite{comp_OT, MERIGOT2021133} for surveys on such methods). 
Focusing for instance on the right-hand side 
problem in \eqref{eq:kanto-dual-w2}, it is possible to add the 
constraint $\int_{\Rsp^d} \psi \dd \mu = 0$ in this problem without altering 
its value. 
The resolution of \eqref{eq:ot-problem} can thus be reduced to the minimization 
of the \emph{Kantorovich functional} 
$\Kant_\rho : \psi \mapsto \int_{\Rsp^d} \psi^* \dd \rho$ under the constraint 
$\int_{\Rsp^d} \psi \dd \mu = 0$. The functional $\Kant_\rho$ being convex, its 
minimization is amenable to first- and second-order optimization methods. 
In these methods, the user must be able to evaluate the \emph{gradient} of 
$\Kant_\rho$ at a given $\psi$. Formally, this gradient reads
$$ \nabla \Kant_\rho(\psi) = - (\nabla \psi^*)_\# \rho. $$
Whenever $\rho$ is absolutely continuous, the numerical computation of such a 
gradient can be challenging in dimension $d \geq 3$. 
This happens for instance in the setting of semi-discrete optimal transport (where 
in addition the target $\mu$ is assumed to be discrete) that is used 
to model Euler incompressible equations \cite{deGoes15, gallouet:hal-01425826}, in 
computational geometry \cite{levy15}, optics design \cite{meyron18} or in 
cosmology \cite{levy22}. In this case, the user might instead consider a 
finitely supported approximation 
$\tilde{\rho} = \frac{1}{N}\sum_i \delta_{x_i}$ of $\rho$, and set
$$ \tilde{\mu} := - \frac{1}{N} \sum_i \delta_{y_i} $$ 
as an approximation for $\nabla \Kant_\rho(\psi)$, where in this definition 
each $y_i$ is chosen as an element of the subdifferential $\partial \psi^*(x_i)$. 
This measure $\tilde{\mu}$ is very easy to compute in practice 
(one only needs to compute elements of the subdifferential of $\psi^*$). 
This procedure then raises the question of the quality of the approximation of 
$\nabla \Kant_\rho(\psi)$ offered by $\tilde{\mu}$ in terms 
of the quality of the approximation of $\rho$ given by $\tilde{\rho}$, which 
is an instance of Problem~\ref{pb:statement}. 

%However, assume  $\tilde{\rho} = \frac{1}{N}\sum_i \delta_{x_i}$ is a finitely 
%supported approximation of $\rho$. For any $t\in[0,1]$, choose as $y_i(t)$ any 
%element of the subdifferential $\partial ( \frac{\nr{\cdot}^2}{2} - \varphi_t )(x_i)$, 
%and set $\tilde{\mu}_t = \frac{1}{N} \sum_i \delta_{y_i(t)}$. This measure $\tilde{\mu}_t$ 
%is very easy to compute in practice (one only needs to compute elements of the 
%subdifferential of $\frac{\nr{\cdot}^2}{2} - \varphi_t $), and 
%Theorem~\ref{th:stability-pushforwards} ensures that it is a (rather) good 
%approximation of $\mu_t$:
%$$ \Wass_2(\mu_t, \tilde{\mu}_t) \leq C\Wass_2(\rho,\tilde{\rho})^{1/3}. $$
%\end{remark} \todo{déplacer dans section numérique dédiée ?}

%\noindent {\it (ii)} \textit{Computation of barycenters in the Linearized 
%Optimal Transport framework.} 
\subsubsection{Computation of geodesics and barycenters in the Linearized Optimal Transport framework.} 
In \cite{LOT_ref_image}, the above-described 
pseudo-Riemannian structure of the Wasserstein space was leveraged to 
\emph{linearize} the optimal transport geometry in order to perform tractable data 
analysis tasks on measure-like data, giving birth to the \emph{Linearized 
Optimal Transport} (LOT) framework. In this framework, 
an absolutely continuous reference 
measure $\rho \in \Prob_2(\Rsp^d)$ is chosen and fixed. Because $\rho$ is absolutely 
continuous, any continuous convex function $\phi : \Rsp^d \to \Rsp$ 
is differentiable $\rho$-almost everywhere, so that the tangent bundle 
$\Tan_\rho \Prob_2(\Rsp^d)$ to $\Prob_2(\Rsp^d)$ at $\rho$ can be regarded (see 
Chapter 8 of \cite{ambrosio2008gradient}) as the 
$\L^2(\rho; \Rsp^d)$ closure of 
$$ \{  \lambda (\nabla \phi - \id) \mid \lambda > 0, \phi \text{ convex} \}. $$
Then, the LOT framework maps any new measure 
$\mu \in \Prob_2(\Rsp^d)$ to $\L^2(\rho; \Rsp^d)$ via the embedding 
$\mu \mapsto \nabla \phi_\mu - \id \in \L^2(\rho; \Rsp^d)$ where $\phi_\mu$ is any 
minimizer of the left-hand side dual problem in \eqref{eq:kanto-dual-w2}. 
This can be seen as sending $\mu \in \Prob_2(\Rsp^d)$ 
into the (linear) tangent 
space $\Tan_\rho \Prob_2(\Rsp^d) \subset \L^2(\rho; \Rsp^d)$ via a 
Riemannian 
logarithmic map. The advantage of employing this embedding is to enable the use of 
all the Hilbertian tools of statistics and machine learning on datasets of probability 
measures, somehow consistently with the Wasserstein geometry. Note that working with 
this embedding is equivalent to replacing the Wasserstein distance with the distance
$$ \Wass_{2, \rho}(\mu, \nu) := \nr{\nabla \phi_\mu - \nabla \phi_\nu}_{\L^2(\rho; \Rsp^d)}.$$
This distance, with respect to which geodesics are called 
\emph{generalized geodesics} in \cite{ambrosio2008gradient}, has been shown 
to be Hölder-equivalent in some settings to the original 
Wasserstein distance $\Wass_2$ in 
\cite{pmlr-v108-merigot20a, delalande:hal-03164147}, 
justifying to some extent the successes of the LOT framework witnessed 
on tasks of pattern recognition \cite{LOT_ref_image, LOT5, LOT4, LOT6}, 
generative modeling \cite{LOT3} or image processing \cite{LOT7}. A 
key advantage of the LOT embedding is that its image is convex, in the 
sense that any convex combination of embeddings  
provide a \emph{valid new embedding}. 
More precisely, for a dataset $(\mu_i)_{1 \leq i \leq N}$ of $N \geq 1$ 
probability measures in $\Prob_2(\Rsp^d)$ and a set $(\alpha_i)_{1 \leq i \leq N}$ of non-negative weights summing to one, the $\L^2(\rho, \Rsp^d)$-barycenter 
$\sum_{i=1}^N \alpha_i (\nabla \phi_{\mu_i} - \id)$ 
%\begin{equation}
%    \label{eq:lot-barycenter}
%    \frac{1}{N} \sum_{i=1}^N (\nabla \phi_{\mu_i} - \id) 
%\end{equation}
of the embeddings of each $\mu_i$ is a valid element of 
$\Tan_\rho \Prob_2(\Rsp^d)$ since it reads as the gradient of a convex 
function minus identity. One can thus apply the above-described exponential map 
to this barycenter of embeddings in order to define the measure 
\begin{equation*}
    \bar{\mu} := \left(\sum_{i=1}^N \alpha_i \nabla \phi_{\mu_i}  \right)_\# \rho.
\end{equation*}

\begin{figure}
    \centering
    \includegraphics[scale=0.5]{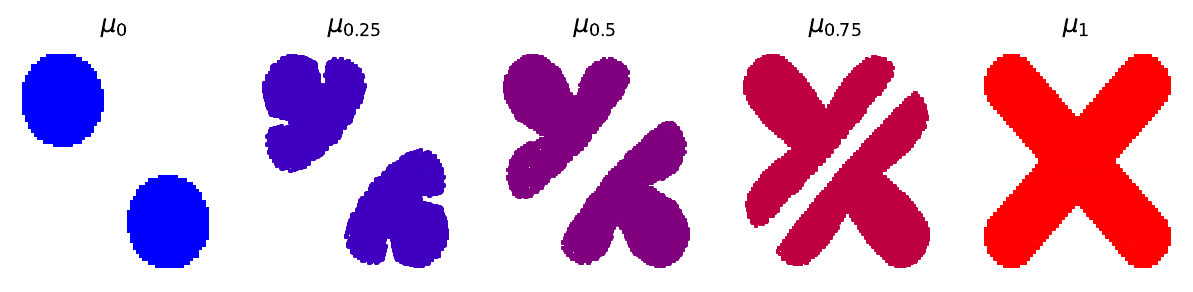}
    \caption{\emph{Linearized Optimal Transport} barycenters, or \emph{generalized geodesic}, between the discrete probability measures $\mu_0, \mu_1 \in \Prob([0,1]^2)$ (colored pixels indicate the position of support points). For $k \in \{0, 1\}$, the optimal transport map $\nabla \phi_k$ between the Lebesgue measure $\rho$ on $[0,1]^2$ and $\mu_k$ is computed using the Python package \texttt{pysdot} and \cite{Kitagawa2019ANA}. Then for $N=70^2$, define $\frac{1}{N}\sum_{i=1}^{N} \delta_{x_i}$ to be a discrete approximation of $\rho$ on a uniform grid. For $t \in (0, 1)$, the interpolant is defined as $\mu_t = \frac{1}{N} \sum_{i=1}^{N}\delta_{y_i^t}$, where each $y_i^t$ is chosen in $\partial ( (1-t) \phi_0 + t \phi_1)(x_i)$.}
    \label{fig:interpolation-pushforwards}
\end{figure}

\noindent The measure $\bar{\mu}$ gives a notion of average of the 
dataset $(\mu_i)_{1 \leq i \leq N}$ with respect to the weights $(\alpha_i)_{1 \leq i \leq N}$ (see Figure \ref{fig:interpolation-pushforwards} for an illustration in the case $N=2$ with varying weights). It may be used in place of the notion 
of Wasserstein barycenter \cite{agueh:hal-00637399}, which is defined as any 
minimizer of 
\begin{equation}
    \label{eq:w2-barycenter}
    \min_{\mu \in \Prob_2(\Rsp^d)} \sum_{i=1}^N \alpha_i
    \Wass_2^2(\mu, \mu_i).
\end{equation}
Wasserstein barycenters provide geometrically meaningful notions of averages of datasets 
of probability measures and have found many successful applications 
\cite{image_processing, geometry_processing, language_processing_1, language_processing_2, language_processing_3, JMLR:v19:17-084, pmlr-v32-cuturi14, pmlr-v70-ho17a}. 
However, the numerical resolution of \eqref{eq:w2-barycenter} is often tedious and 
working with the proxy $\bar{\mu}$ is often preferable since it essentially requires solving 
$N$ optimal transport problems between $\rho$ and each $\mu_i$. Nonetheless, it also 
requires computing the pushforward of $\rho$ by the map 
$\sum_{i=1}^N \alpha_i \nabla \phi_{\mu_i}$, which can be difficult in dimension 
$d \geq 3$. In practice, as for the computation of the gradient of the Kantorovich 
functional above, the user may approximate $\bar{\mu}$ by first 
discretizing $\rho$ and then pushing forward this discretization by the the map 
$\sum_{i=1}^N \alpha_i \nabla \phi_{\mu_i}$. The problem of 
controlling the bias induced by this process then gives another instance of 
Problem~\ref{pb:statement}. 

%\noindent {\it (iii)} \textit{Generative modelling with ICNNs.}
\subsubsection{Generative modeling with ICNNs.} 
Over the last decade, tools 
from optimal transport have made an increasing number of successful incursions in 
large-scale machine learning problems. These incursions are in part due to the introduction 
in \cite{pmlr-v70-amos17b} of the \emph{Input Convex Neural Networks} (ICNNs). These are 
neural networks whose architecture constraints them to be convex with respect 
to their input. Such networks were shown to be able to approximate arbitrarily well in supremum 
norm any 
convex Lipschitz function 
on a bounded domain~\cite{Chen18convex}. ICNNs have been used in the context of generative 
modeling through optimal transport, 
where one typically wants to learn a \emph{model} for a data probability distribution $\mu$ 
through the observation of samples from it. The optimal transport approach of this problem 
generally sees $\mu$ as the pushforward of a chosen simple probability distribution $\rho$ 
(typically a Gaussian) by an optimal transport map. This transport map must be 
learned: in \cite{taghvaei20192wasserstein, pmlr-v119-makkuva20a, korotin2021wasserstein, bunne22}, 
it is parametrized as the gradient of an ICNN 
$\phi_\theta$ parametrized by $\theta$, and the loss functions used in these works 
to find the right parameter are essentially proxies for the loss  
$$ \Loss(\theta) = \Wass_2( (\nabla \phi_\theta)_\# \rho, \mu). $$
In practice however, neither the source nor target distributions 
$\rho$ and $\mu$ are directly 
usable or known, and the user has to deal with statistical approximations 
$\hat{\rho}$ and 
$\hat{\mu}$ instead. This leads to the minimization of the empirical loss function
$$ \hat{\Loss}(\theta) = \Wass_2( (\nabla \phi_\theta)_\# \hat{\rho}, \hat{\mu}) $$
in place of the original loss function $\mathcal{L}$. In order to derive convergence rates 
for this empirical risk minimization problem, one may want to upper bound 
$|\hat{\Loss}(\theta) - \Loss(\theta)|$ in terms of $\Wass_2(\hat{\rho}, \rho)$ and 
$\Wass_2(\hat{\mu}, \mu)$. This reduces to yet another instance of 
Problem~\ref{pb:statement} after a use of the triangle inequality.

\subsection{Positive and negative results.} We expose here a positive result for 
Problem~\ref{pb:statement} in the case where $\phi$ is assumed to be regular. We also expose 
negative results in the case where no assumptions are made on $\phi, \rho$ and $\tilde{\rho}$, 
justifying this way the necessity for minimal assumptions.

\subsubsection{A positive result in the regular case} In the case 
where the convex function 
$\phi$ of Problem~\ref{pb:statement} is of class 
$\Class^{1, \alpha}$ for some $\alpha > 0$, the answer to the question raised in this problem 
is trivial:
\begin{propositionst}
    Let $\alpha \in (0,1]$ and $\phi \in \Class^{1, \alpha}(\Rsp^d)$ convex. 
    Then for any $\rho, \tilde{\rho} \in \Prob_2(\Rsp^d)$,
    $$ \Wass_2( (\nabla \phi)_\# \rho, (\nabla \phi)_\# \tilde{\rho}) \leq \nr{\nabla \phi}_{\Class^{0, \alpha}} \Wass_2(\rho, \tilde{\rho})^\alpha. $$
\end{propositionst}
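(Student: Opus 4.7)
The plan is to lift an optimal coupling from the base measures to the pushforward measures via the joint map $(\nabla\phi, \nabla\phi)$, then control the transport cost using the $\alpha$-Hölder regularity of $\nabla\phi$, and finally conclude with Jensen's inequality applied to the concave map $t \mapsto t^\alpha$.

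In more detail, I would first take $\gamma \in \Gamma(\rho, \tilde{\rho})$ to be an optimal coupling for $\Wass_2(\rho, \tilde{\rho})$, which exists under the finite second moment assumption. Setting $T(x,y) := (\nabla\phi(x), \nabla\phi(y))$, the measure $T_\# \gamma$ has first marginal $(\nabla\phi)_\# \rho$ and second marginal $(\nabla\phi)_\# \tilde{\rho}$, hence it is an admissible (not necessarily optimal) coupling for the pushforward pair. By definition of the Wasserstein distance,
\begin{equation*}
    \Wass_2^2\bigl((\nabla\phi)_\# \rho, (\nabla\phi)_\# \tilde{\rho}\bigr) \leq \int_{\Rsp^d \times \Rsp^d} \nr{\nabla\phi(x) - \nabla\phi(y)}^2 \dd \gamma(x,y).
\end{equation*}

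Next, the $\Class^{0,\alpha}$ regularity of $\nabla\phi$ gives $\nr{\nabla\phi(x) - \nabla\phi(y)} \leq \nr{\nabla\phi}_{\Class^{0,\alpha}} \nr{x-y}^\alpha$ pointwise, so the right-hand side is bounded above by $\nr{\nabla\phi}_{\Class^{0,\alpha}}^2 \int \nr{x-y}^{2\alpha} \dd \gamma(x,y)$. Since $\alpha \in (0,1]$, the function $u \mapsto u^\alpha$ is concave on $[0, \infty)$, and $\gamma$ is a probability measure, so Jensen's inequality yields
\begin{equation*}
    \int \nr{x-y}^{2\alpha} \dd \gamma(x,y) \leq \left( \int \nr{x-y}^2 \dd \gamma(x,y) \right)^\alpha = \Wass_2(\rho, \tilde{\rho})^{2\alpha},
\end{equation*}
where the last equality uses the optimality of $\gamma$. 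Taking square roots produces the claimed inequality.

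There is no real obstacle here; the statement is essentially a soft consequence of Hölder continuity plus Jensen's inequality, and the only mild point to keep in mind is that $\alpha \leq 1$ is precisely what makes $u \mapsto u^\alpha$ concave so that Jensen's inequality goes in the right direction. The interest of the proposition lies in contrast with the non-regular case treated in the rest of the paper, not in the argument itself.
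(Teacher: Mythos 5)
Your argument is correct and coincides with the paper's sketch: push an optimal coupling forward by $(\nabla\phi,\nabla\phi)$ to obtain an admissible coupling, bound the cost via $\alpha$-Hölder continuity of $\nabla\phi$, and apply Jensen's inequality with the concave map $u\mapsto u^\alpha$. Nothing to add.
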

\noindent This follows from Jensen's inequality and the fact that for any 
$\gamma \in \Gamma(\rho, \tilde{\rho})$, $(\nabla \phi, \nabla \phi)_\# \gamma$ 
is a valid coupling between $(\nabla \phi)_\# \rho$ and $(\nabla \phi)_\# \tilde{\rho}$. 
Even though this proposition brings an answer to Problem~\ref{pb:statement}, its outreach is 
limited. Indeed, when $\phi$ is an optimal transport potential (i.e. a solution to a dual 
problem of the type of~\eqref{eq:kanto-dual-w2}), getting regularity estimates for $\phi$ 
requires in general
to make strong regularity assumptions on the involved measures in order to be able to apply 
Caffarelli's regularity theory results \cite{Caffarelli1,Caffarelli2}, assumptions that are 
rarely satisfied in applications where at least one of the considered measures 
is often discrete. For instance, when $\phi$ is the dual solution of a semi-discrete 
optimal transport problem (with absolutely continuous source and discrete target), $\phi$ 
corresponds to a maximum of affine functions and as such it has many singularities. 
These singularities are actually often desirable, as for instance in the context 
of generative modeling of a data probability distribution $\mu$ with disconnected support 
as the pushforward of a Gaussian $\rho$ by the gradient of a convex function \cite{pmlr-v119-makkuva20a}.

\subsubsection{Negative results.} Whenever $\phi$ has singularities, it is very easy 
to build measures $\rho, \tilde{\rho}$ and couplings $\gamma, \tilde{\gamma}$ in 
Problem~\ref{pb:statement} that are such that it is not possible to control 
$\Wass_2( (p_2)_\# \gamma, (p_2)_\# \tilde{\gamma})$ in terms of $\Wass_2(\rho, \tilde{\rho})$. 
Consider for instance in dimension $d=1$ the case where $\phi = \abs{\cdot}$ is the 
absolute value. Then $\phi$ has a singularity at $0$:
$$ \partial \phi (0) = [-1, 1]. $$
A first negative result is available when both $\rho$ and $\tilde{\rho}$ are 
allowed to be discrete:
\begin{example}
    \label{ex:sources-discrete}
    Let $\phi = \abs{\cdot}$ on $\Rsp$. Let $\rho = \tilde{\rho} = \delta_0$ be the 
    the Dirac mass at zero and let $\gamma = \delta_{(0,1)}$ and 
    $\tilde{\gamma} = \delta_{(0,-1)}$. Then $(p_1)_\# \gamma = \rho$,
    $(p_1)_\# \tilde{\gamma} = \tilde{\rho}$, 
    $\spt(\gamma) \subset \partial \phi$ and 
    $\spt(\tilde{\gamma}) \subset \partial \phi$. However 
    $\Wass_2( (p_2)_\# \gamma, (p_2)_\# \tilde{\gamma}) = 2$ while 
    $\Wass_2(\rho, \tilde{\rho}) = 0$.
\end{example}

\noindent This example relies on placing both $\rho$ and $\tilde{\rho}$ at singularities 
of the convex function $\phi$. The set of singular points 
(i.e. points of non-differentiability) of a convex function defined on $\Rsp$ being 
at most countable, 
one can wonder what happens if we constraint one of the source measures in 
Problem~\ref{pb:statement} to be absolutely continuous with respect to the Lebesgue measure. 
Under such a constraint, it is still possible to build source measures that are arbitrarily 
close from each other but with pushforwards that are at a fixed non-zero distance 
from each other:

\begin{example}
    \label{ex:sources-discrete-and-ac-explodes}
    Let $\phi = \abs{\cdot}$ on $\Rsp$ and let $\eps>0$. Let $\rho = \delta_0$ and let 
    $\rho^\eps = \frac{1}{\eps} \lambda_{\vert [-\frac{\eps}{2}, \frac{\eps}{2}]}$ 
    be the rescaled Lebesgue measure restricted to $[-\frac{\eps}{2}, \frac{\eps}{2}]$. 
    Let $\gamma = \delta_{(0,1)}$ and $\gamma^\eps = (\id, \nabla \phi)_\# \rho^\eps$. 
    Then $(p_1)_\# \gamma = \rho$,
    $(p_1)_\# \gamma^\eps = \rho^\eps$, 
    $\spt(\gamma) \subset \partial \phi$ and 
    $\spt(\gamma^\eps) \subset \partial \phi$. However 
    $\Wass_2( (p_2)_\# \gamma, (p_2)_\# \gamma^\eps) = \sqrt{2}$ while 
    $\Wass_2(\rho, \rho^\eps) = \eps/2\sqrt{3}$.
\end{example}

\noindent Example~\ref{ex:sources-discrete-and-ac-explodes} relies on an absolutely continuous 
source 
measure $\rho^\eps$ whose density is allowed to explode so as to recover in the limit 
$\eps \to 0$ the pathological case of Example~\ref{ex:sources-discrete} with only discrete 
sources. In order to avoid this problem, we will make from now on the following 
minimal assumption in 
Problem~\ref{pb:statement}: one of the probability measures, say $\rho$, is absolutely 
continuous with respect to the Lebesgue measure and its density is upper bounded by 
some finite constant $M_\rho > 0$. Under this assumption, it is still possible to build an example 
(not as bad as Examples \ref{ex:sources-discrete} and 
\ref{ex:sources-discrete-and-ac-explodes}) showing that one cannot expect better than a 
Hölder-behavior for the pushforward operation:

\begin{example}
    \label{ex:holder-behavior} (See Figure~\ref{fig:holder-behavior} for an 
    illustration.)
    Let $\phi = \abs{\cdot}$ on $\Rsp$ and let $\eps \in (0, \frac{1}{2})$. Let 
    $\rho = \lambda_{[-\frac{1}{2}, \frac{1}{2}]}$ and let 
    $\rho^\eps = \lambda_{\vert[-\frac{1}{2}, -\frac{\eps}{2}] \cup [\frac{\eps}{2}, \frac{1}{2}]} + \eps \delta_0$. 
    Let $\gamma = (\id, \nabla \phi)_\# \rho$ and let 
    %$\gamma^\eps = \int_{-1/2}^{-\eps/2} \delta_x \otimes \delta_{-1} \dd x + \eps \delta_{(0,1)} + \int_{\eps/2}^{1/2} \delta_x \otimes \delta_{1} \dd x$. 
    $\gamma^\eps = \int_{[-\frac{1}{2}, -\frac{\eps}{2}] \cup [\frac{\eps}{2}, \frac{1}{2}]} \delta_x \otimes \delta_{\nabla \phi (x)} \dd x + \eps \delta_{(0,1)}$. 
    Then $(p_1)_\# \gamma = \rho$,
    $(p_1)_\# \gamma^\eps = \rho^\eps$, 
    $\spt(\gamma) \subset \partial \phi$ and 
    $\spt(\gamma^\eps) \subset \partial \phi$. 
    Moreover, 
    $\Wass_2( (p_2)_\# \gamma, (p_2)_\# \gamma^\eps) = (2 \eps)^{1/2}$ while 
    $\Wass_2(\rho, \rho^\eps) = (\eps^3/12)^{1/2}$, so that 
    $\Wass_2( (p_2)_\# \gamma, (p_2)_\# \gamma^\eps) \sim \Wass_2(\rho, \rho^\eps)^{1/3}$.
\end{example}

\begin{figure}
    \centering
    \includegraphics[scale=0.55]{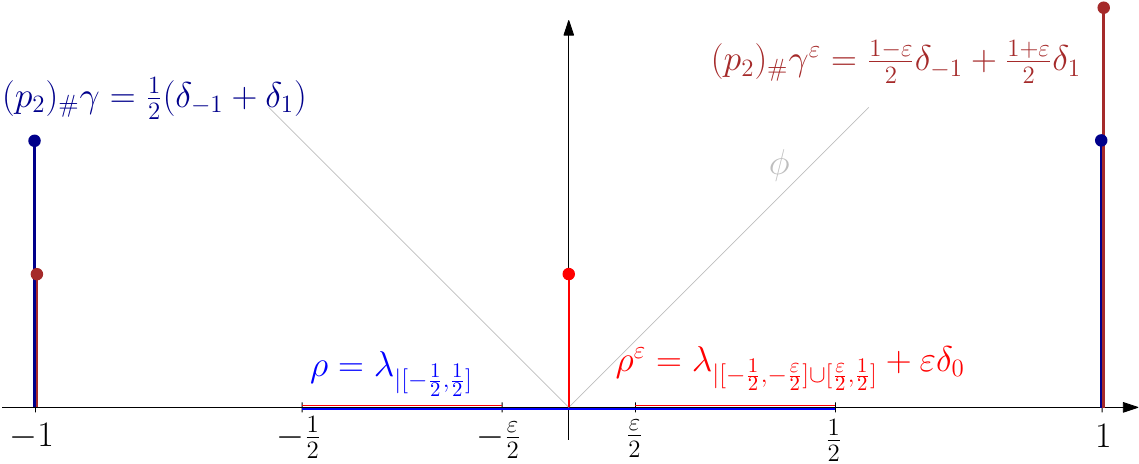}
    \caption{Illustration of Example~\ref{ex:holder-behavior}.}
    \label{fig:holder-behavior}
\end{figure}

\subsection{Contributions and outline.} In this article, we limit ourselves 
to a compact setting 
and work with probability measures supported in a ball $\Omega = B(0, R) \subset \Rsp^d$ 
centered at zero and of radius $R>0$. In this context, we assume that the convex function $\phi$ 
of Problem~\ref{pb:statement} is an $R$-Lipschitz continuous convex function in order to ensure 
that the 
\emph{pushforward} measures $(p_2)_\# \gamma$ and $(p_2)_\# \tilde{\gamma}$ of this problem 
also live in $\Omega$. We emphasize on the fact that we make \emph{no regularity assumption} 
on $\nabla \phi$. 

Our main result shows that, perhaps surprisingly, 
the situation described in Example~\ref{ex:holder-behavior} is as bad as it could get, and 
the Hölder-behavior being observed in this example is a general phenomenon:

\begin{theoremst}[Theorem~\ref{th:stability-pushforwards}]
Let $R>0$ and let $\Omega = B(0, R) \subset \Rsp^d$. Let $\phi : \Omega \to \Rsp$ be an 
$R$-Lipschitz continuous convex function. Let $\rho, \tilde{\rho} \in \Prob(\Omega)$ and 
assume that $\rho$ is absolutely continuous with density upped bounded by a constant 
$M_\rho \in (0, \infty)$. Then for any $\tilde{\gamma} \in \Prob(\Omega \times \Omega)$ 
such that $(p_1)_\# \tilde{\gamma} = \tilde{\rho}$ and $\spt(\tilde{\gamma}) \subset \partial \phi$,
$$ \Wass_2( (\nabla \phi)_\# \rho, (p_2)_\# \tilde{\gamma}) \lesssim \Wass_2(\rho, \tilde{\rho})^{1/3},$$
where $\lesssim$ hides an explicit multiplicative constant that depends on $d$, $R$ and $M_\rho$.
\end{theoremst}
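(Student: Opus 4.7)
The plan is to construct an explicit transport plan between $(\nabla\phi)_\#\rho$ and $(p_2)_\#\tilde\gamma$, and then to bound its quadratic cost by splitting the integrand according to whether $|x-\tilde x|$ exceeds a free parameter $r>0$ to be optimized.

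Let $\pi \in \Gamma(\rho, \tilde\rho)$ be $\Wass_2$-optimal and disintegrate $\tilde\gamma$ along its first marginal, $\tilde\gamma = \int \delta_{\tilde x} \otimes \tilde\gamma_{\tilde x}\,\dd\tilde\rho(\tilde x)$, where for $\tilde\rho$-a.e.\ $\tilde x$ the probability $\tilde\gamma_{\tilde x}$ is supported in $\partial\phi(\tilde x)$. Set
\[
\Pi := \int_{\Omega\times\Omega} \delta_{\nabla\phi(x)} \otimes \tilde\gamma_{\tilde x}\, \dd\pi(x,\tilde x) \in \Prob(\Omega\times\Omega).
\]
A direct check gives $(p_1)_\#\Pi = (\nabla\phi)_\#\rho$ and $(p_2)_\#\Pi = (p_2)_\#\tilde\gamma$, so that $\Wass_2^2\bigl((\nabla\phi)_\#\rho, (p_2)_\#\tilde\gamma\bigr) \leq I := \int\!\!\int |\nabla\phi(x)-\tilde y|^2\,\dd\tilde\gamma_{\tilde x}(\tilde y)\,\dd\pi(x,\tilde x)$.

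On $\{|x-\tilde x|>r\}$, the integrand is bounded by $(2R)^2$ and Markov's inequality applied to $\pi$ yields a contribution of at most $4R^2\Wass_2(\rho,\tilde\rho)^2/r^2$. On $\{|x-\tilde x|\leq r\}$, both $\nabla\phi(x)$ and $\tilde y$ lie in $\partial\phi[B(x,r)] := \bigcup_{x'\in B(x,r)}\partial\phi(x')$, hence
\[
|\nabla\phi(x)-\tilde y| \leq \omega_r(x) := \diam\bigl(\partial\phi[B(x,r)]\bigr),
\]
and the contribution is at most $\int \omega_r^2\,\dd\rho \leq M_\rho \int_\Omega \omega_r(x)^2\,\dd x$. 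Therefore
\[
\Wass_2^2\bigl((\nabla\phi)_\#\rho, (p_2)_\#\tilde\gamma\bigr) \leq \frac{4R^2\,\Wass_2(\rho,\tilde\rho)^2}{r^2} + M_\rho \int_\Omega \omega_r(x)^2\,\dd x.
\]

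The heart of the argument is then the \emph{singular set estimate} advertised in the abstract:
\[
\int_\Omega \omega_r(x)^2\,\dd x \leq C(d,R)\cdot r
\]
for all sufficiently small $r>0$. The underlying fact is that a convex, $R$-Lipschitz function on a ball has distributional Hessian $D^2\phi$ of finite total variation, bounded by a constant depending only on $d$ and $R$ (essentially through Green's formula applied to the positive measure $\Delta\phi$, using that $D^2\phi$ is PSD so its Frobenius norm is controlled by its trace). From this one deduces a distributional bound $|\{x\in\Omega:\omega_r(x)>t\}| \leq C(d,R)\,r/t$, either by working coordinate by coordinate (using that one-dimensional restrictions of $\phi$ are convex, so that directional derivatives are monotone along their direction, and combining a 1D monotone-function estimate with Fubini) or by a direct BV argument for $\nabla\phi$. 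A Cavalieri computation, combined with the uniform bound $\omega_r\leq 2R$, then yields the claimed $L^2$ estimate. This step is the main obstacle and is the genuinely new ingredient; the coupling and thresholding above are standard once this bound is in hand.

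Finally, optimizing the free parameter by the choice $r = \Wass_2(\rho,\tilde\rho)^{2/3}$ balances the two terms and gives
\[
\Wass_2^2\bigl((\nabla\phi)_\#\rho, (p_2)_\#\tilde\gamma\bigr) \lesssim \Wass_2(\rho,\tilde\rho)^{2/3},
\]
which is exactly the announced $\tfrac13$-Hölder bound with constants depending only on $d$, $R$ and $M_\rho$. A quick sanity check on Example~\ref{ex:holder-behavior} (where $\Wass_2(\rho,\rho^\eps)\sim\eps^{3/2}$ and the choice $r=\eps$ collects all the bad mass concentrated near the singularity $\{0\}$) shows that the rate is tight, so no loss is incurred along the argument.
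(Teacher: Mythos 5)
Your coupling-and-thresholding scheme is correct and essentially the same as the paper's, with the minor (and slightly cleaner) difference that you work with an optimal transport \emph{plan} $\pi\in\Gamma(\rho,\tilde\rho)$ rather than the optimal transport \emph{map} $S$ that the paper extracts from the absolute continuity of $\rho$; the resulting coupling $\Pi$ plays the role of the paper's $(T_\varphi,\id)_\#\gamma$. The Markov bound on $\{|x-\tilde x|>r\}$, the reduction to the singular-set estimate $\int_\Omega\omega_r^2\,\dd x\lesssim r$, the Cavalieri reduction to the distributional bound $|\{\omega_r>t\}|\lesssim r/t$, and the choice $r=\Wass_2(\rho,\tilde\rho)^{2/3}$ all check out and match the paper.

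The genuine gap is in the justification of the distributional bound, and both routes you sketch fail to close it. A \emph{direct BV argument for $\nabla\phi$} cannot work on its own: the inequality $|\{x:\osc_{B(x,r)}u\geq\alpha\}|\lesssim r\,\|Du\|_{\mathrm{TV}}/\alpha$ is false for general BV maps $u$. For instance $u=\alpha\,\mathbf{1}_{B(0,\eps)}$ has $\|Du\|_{\mathrm{TV}}\sim\alpha\,\eps^{d-1}$, yet for $r$ large compared to $\eps$ (and $d\geq2$) the set where $\osc_{B(x,r)}u=\alpha$ has measure of order $r^d$, which is much larger than $r\eps^{d-1}$. So the total-variation bound on $D^2\phi$ alone, which is all your route (b) retains from convexity, is not enough. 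The \emph{coordinate-by-coordinate} route is also incomplete: monotonicity of $\partial_i\phi$ holds only along lines parallel to $e_i$, so Fubini with a 1D Markov estimate controls
$\bigl|\{x:\partial_i^+\phi(x+re_i)-\partial_i^-\phi(x-re_i)\geq\alpha\}\bigr|$, which is in general strictly smaller than the set $\{x:\sup_{B(x,r)}\partial_i^+\phi-\inf_{B(x,r)}\partial_i^-\phi\geq\alpha\}$ that $\omega_r$ actually involves; the monotonicity inequality $\sca{g-g'}{y-y'}\geq0$ mixes coordinates and does not decouple when $y,y'$ are off the $e_i$-line through $x$.

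The missing ingredient is the convexity-specific pointwise bound of Lemma~\ref{lemma:bound-diam-grad-l1-norm-grad}:
\begin{equation*}
\diam\bigl(\partial\phi(B(x,\eta))\bigr)\;\leq\;\frac{12}{\beta_d\eta^d}\,\nr{\nabla\phi-c}_{\L^1(B(x,4\eta))}\qquad\text{for every }c\in\Rsp^d,
\end{equation*}
which localizes the subdifferential diameter to a mean-zero $\L^1$ average of $\nabla\phi$. It is this estimate — not merely $\|D^2\phi\|_{\mathrm{TV}}<\infty$ — that, via Poincaré--Wirtinger, the trace trick $\|D^2\phi\|_{1,1}\leq d\,\Delta\phi$ you mention, a $4\eta$-packing of the superlevel set, and Green's formula, yields the covering-number bound of Theorem~\ref{th:size-singular-sets-cvx-lip} and hence $|\{\omega_r>t\}|\lesssim r/t$. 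Without this lemma (or a substitute that exploits convexity at a comparable pointwise level), the deduction you sketch does not go through.
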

We refer to Theorem~\ref{th:stability-pushforwards} in Section~\ref{sec:stability-pushforwards} 
for a more precise statement, with in particular an explicit expression for the hidden constant. 
Note also that the statement of Theorem~\ref{th:stability-pushforwards} is not limited to the 
context of quadratic optimal transport (i.e. optimal transport with respect to the cost 
$c(x,y)=\nr{x-y}^2$) but deals with the more general case of pushforwards by transport maps 
that are optimal with respect to the \emph{$p$-cost} $c(x,y)=\nr{x-y}^p$ for $p \geq 2$ ; and that 
the bounds are expressed in $\Wass_q$ and $\Wass_r$ distances (with $q$ and $r$ parameters 
to be chosen) in order to ensure the highest generality.

We are now in place of sketching the proof of our main result and the outline of the rest of 
the article. In Examples~\ref{ex:sources-discrete}, \ref{ex:sources-discrete-and-ac-explodes} and 
\ref{ex:holder-behavior}, we have seen that the \emph{instabilities} in the 
pushforward operation by the gradient of a convex Lipschitz function 
arise from the singularities of this function. 
Our main technical result, presented in Theorem~\ref{th:size-singular-sets-cvx-lip} of 
Section~\ref{sec:singular-sets} and which might be of independent interest, 
shows that on a bounded domain the number of singularities 
of such a function can be explicitly bounded. More precisely, for $\phi$ a convex Lipschitz 
function defined on $\Rsp^d$, we present in 
Theorem~\ref{th:size-singular-sets-cvx-lip} a tight upper bound on the covering numbers of the 
singular sets 
$$ \Sigma_{\eta, \alpha} = \{ x \in \Omega \mid \diam(\partial \phi(B(x,\eta))) \geq \alpha\}, $$
where $\alpha>0$ and $\eta>0$. In Remark~\ref{rk:alberti}, we note that the bound 
of Theorem~\ref{th:size-singular-sets-cvx-lip} may be seen as a refinement of 
a well-known result of 
Alberti, Ambrosio and Cannarsa \cite{Alberti1992}, who derived 
upper bounds on the dimension of the singular sets of semi-convex functions  
using measure-theoretic arguments, falling into the long line of works that studied the 
structure of the 
singularities of solutions to Hamilton-Jacobi equations 
\cite{Tsuji83, Tsuji86, Jensen87, Cannarsa87, Cannarsa89, Nakane91, Ambrosio1993} -- see 
\cite{Cannarsa2021} for a survey. As an immediate corollary to 
Theorem~\ref{th:size-singular-sets-cvx-lip} --~presented in 
Corollary~\ref{cor:estimate-grad-phi} of Section~\ref{sec:singular-sets}~-- 
we deduce that the function $\phi$ from this 
theorem satisfies the following integral estimate for any $\eta>0$:
\begin{equation}
    \label{eq:integral-bound-singularities}
    \int_{\Omega} \diam( \partial \phi(B(x, \eta)))^2 \dd x \lesssim \eta. 
\end{equation}
In Section~\ref{sec:stability-pushforwards}, after recalling some facts on the optimal 
transport problem with a general ground cost, we state and prove the main result 
Theorem~\ref{th:stability-pushforwards} that brings a tight answer to Problem~\ref{pb:statement}. 
This result essentially relies 
on~\eqref{eq:integral-bound-singularities} and a Markov bound. Let us sketch here the main idea: 
denote 
$S : \Omega \to \Omega$ the optimal transport map from $\rho$ to $\tilde{\rho}$ and for 
$\eta>0$, introduce the set $\Omega_\eta = \{ x \in \Omega \mid \nr{S(x) - x} \leq \eta \}$. Then, one has that 
$\Wass_2(\rho, \tilde{\rho}) = \nr{S - \id}_{\L^2(\rho, \Rsp^d)}$, so that 
Markov's inequality entails
\begin{equation}
    \label{eq:markov-bound-intro}
    \rho (\Omega \setminus \Omega_\eta) \lesssim \frac{\Wass_2^2(\rho, \tilde{\rho})}{\eta^2}. 
\end{equation}
Bounds \eqref{eq:integral-bound-singularities} and \eqref{eq:markov-bound-intro} allow 
to conclude: assuming here for simplicity that $\phi$ is differentiable 
$\tilde{\rho}$-almost-everywhere, we have for any $\eta>0$
\begin{align*}
    \Wass_2^2((\nabla \phi)_\# \rho, (\nabla \phi)_\# \tilde{\rho} ) %&\leq \int_\Omega \nr{\nabla \phi - \nabla \phi \circ S}^2 \dd \rho \\
    &\leq \int_{\Omega_\eta} \nr{\nabla \phi - \nabla \phi \circ S}^2 \dd \rho + \int_{\Omega \setminus \Omega_\eta} \nr{\nabla \phi - \nabla \phi \circ S}^2 \dd \rho \\
    &\leq  M_\rho \int_{\Omega_\eta} \diam( \partial \phi(B(x, \eta)))^2 \dd x + \int_{\Omega \setminus \Omega_\eta} (2R)^2 \dd \rho \\
    &\lesssim \eta + \frac{\Wass_2^2(\rho, \tilde{\rho})}{\eta^2}. 
\end{align*}
Setting $\eta = \Wass_2^{2/3}(\rho, \tilde{\rho})$ allows to reach the conclusion 
of Theorem~\ref{th:stability-pushforwards}.

\section{Covering number of near-singularity sets of convex functions}
\label{sec:singular-sets}
The following result allows to quantify the size of the singular sets (i.e. points of non-differentiability) of a convex Lipschitz function on a bounded domain. %This result is in the spirit of the article of Alberti, Ambrosio and Cannarsa \cite{Alberti1992}, see Remark \ref{rk:alberti}.
We bound here the covering numbers of the sets of points $x$ in the domain for which there exist two \textit{nearby} points $x^\pm$, i.e. such that $\nr{x - x^{\pm}}\leq \eta$, where the gradients of the convex function $\phi$ are \textit{far} from each other, i.e. such that $\nr{\nabla \phi(x^+) - \nabla\phi(x^-)} \geq \alpha $. 
In this statement, $\NCov(K, \eta)$ denotes the minimum number of balls of radius $\eta>0$ that are needed to cover a compact set $K \subset \Rsp^d$.

\begin{theorem} 
\label{th:size-singular-sets-cvx-lip}
Let $\phi: \Rsp^d \to\Rsp$ be a convex and Lipschitz continuous function. Denote 
\begin{align*}
    \Sigma_{\eta,\alpha} &= \{ x \in \Rsp^d\mid \diam(\partial\phi(B(x,\eta)) \geq \alpha\}, \\
    \Sigma_\alpha &= \{ x\in \Rsp^d\mid \diam(\partial\phi(x)) \geq \alpha\}.
\end{align*} 
Then, for all $R\ > 0$, $\alpha$ and $\eta>0$, we have
$$ \NCov(\Sigma_{\eta,\alpha} \cap B(0,R), 8\eta) \leq c_{d,R,\eta} \frac{\Lip(\phi)}{\alpha \eta^{d-1}}, $$
with $c_{d,R,\eta} = 48 d^2 (R+4\eta)^{d-1}$. 
In particular, there exists a dimensional constant $c_d$ such that
$$ \Haus^{d-1}(\Sigma_\alpha\cap B(0,R))\leq c_d \frac{\Lip(\phi) R^{d-1}}{\alpha}. $$
\end{theorem}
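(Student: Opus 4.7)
The plan is to first reduce the covering-number bound to a packing bound via a standard greedy argument, then ``linearize'' the problem by a direction-bucketing trick, and finally control each bucket by balancing a global upper bound against a local lower bound on the nonnegative Radon measure $\partial_{ee}\phi := e^\top D^2\phi\,e$ for suitable unit vectors $e$. Concretely, I would extract a maximal family $x_1,\ldots,x_N\in\Sigma_{\eta,\alpha}\cap B(0,R)$ with balls $B(x_i,4\eta)$ pairwise disjoint; by maximality the enlarged balls $B(x_i,8\eta)$ cover $\Sigma_{\eta,\alpha}\cap B(0,R)$, so it suffices to estimate $N$. For each $i$, fix witnesses $x_i^\pm\in B(x_i,\eta)$ and $y_i^\pm\in\partial\phi(x_i^\pm)$ with $\|y_i^+-y_i^-\|\geq\alpha$. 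Since at least one coordinate of $y_i^+-y_i^-$ has absolute value $\geq\alpha/\sqrt d$, I can assign each $x_i$ a direction $e^{(k_i)}\in\{\pm e_1,\ldots,\pm e_d\}$ such that $\langle y_i^+-y_i^-,e^{(k_i)}\rangle\geq\alpha/\sqrt d$. The $x_i$'s are thus partitioned into at most $2d$ groups, and it remains to bound the size $N_k$ of a single group.

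Fix a group and assume $e^{(k)}=e_1$. Convexity of $\phi$ makes $\partial_{ee}\phi$ a nonnegative Radon measure, and integration by parts (after approximating $\phi$ by smooth convex functions and passing to the limit) yields the global upper bound
\[
\partial_{ee}\phi(B(0,R+2\eta)) \leq C_d\,\Lip(\phi)\,(R+2\eta)^{d-1}
\]
for every unit vector $e$. The hard part, and the main obstacle, is proving the matching \emph{local lower bound}
\[
\partial_{11}\phi(B(x_i,2\eta)) \;\geq\; c_d\,\frac{\alpha}{\sqrt d}\,\eta^{d-1}
\]
for each $x_i$ in the $e_1$-group: the jump $y_{i,1}^+-y_{i,1}^-\geq\alpha/\sqrt d$ observed only at two specific points need not, a priori, produce mass of $\partial_{11}\phi$ on the surrounding ball, so this is where the geometry of convex functions must be used essentially. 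The approach I would take combines the BV slicing identity
\[
\partial_{11}\phi(B(x_i,2\eta)) = \int_{\pi_1(B(x_i,2\eta))}\bigl(\partial_1\phi(z+t^+(z)e_1)-\partial_1\phi(z+t^-(z)e_1)\bigr)\,dz,
\]
where $\pi_1$ denotes orthogonal projection onto $e_1^\perp$ and $t^\pm(z)$ are the endpoints of $L_z\cap B(x_i,2\eta)$ with $L_z=\{z+te_1:t\in\Rsp\}$, with the monotonicity of $\partial\phi$: the witness pairs $(x_i^\pm, y_i^\pm)$ define two affine supports of $\phi$ whose interaction with the monotonicity inequality $\langle y-y_i^\pm, x-x_i^\pm\rangle\geq 0$ forces the 1D jump of $\partial_1\phi$ along $L_z$ to remain $\gtrsim \alpha/\sqrt d$ for $z$ ranging in a subset of $\pi_1(B(x_i,2\eta))$ of $(d-1)$-dimensional Lebesgue measure $\gtrsim\eta^{d-1}$. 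Integrating then produces the desired local lower bound.

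With the local lower bound in hand, the disjointness of the balls $B(x_i,2\eta)\subset B(0,R+2\eta)$ and the global upper bound combine to give
\[
N_k\,c_d\,\frac{\alpha}{\sqrt d}\,\eta^{d-1}\leq \sum_i\partial_{11}\phi(B(x_i,2\eta))\leq C_d\,\Lip(\phi)\,(R+2\eta)^{d-1},
\]
so $N_k\lesssim \Lip(\phi)(R+2\eta)^{d-1}/(\alpha\eta^{d-1})$; summing over the $2d$ direction groups and tracking the numerical factors produces the stated bound with $c_{d,R,\eta}=48d^2(R+4\eta)^{d-1}$. For the Hausdorff-measure corollary, the inclusion $\Sigma_\alpha\subset\Sigma_{\eta,\alpha}$ gives the corresponding covering-number bound for $\Sigma_\alpha\cap B(0,R)$; multiplying by $(8\eta)^{d-1}$ and letting $\eta\to 0$ (so that $c_{d,R,\eta}\to 48d^2R^{d-1}$), together with the standard relation between covering numbers and $(d-1)$-dimensional Hausdorff measure, yields $\Haus^{d-1}(\Sigma_\alpha\cap B(0,R))\leq c_d\,\Lip(\phi)\,R^{d-1}/\alpha$.
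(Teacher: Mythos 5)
Your plan shares the paper's structural skeleton---extract a maximal $4\eta$-packing $Z$ of $\Sigma_{\eta,\alpha}\cap B(0,R)$, play disjointness of the balls $B(x_i,4\eta)$ against a nonnegative second-order quantity, and close with a global integration-by-parts bound $\lesssim \Lip(\phi)\,(R+4\eta)^{d-1}$---but the local estimate you hang everything on is false. The paper's local lower bound is on the \emph{Laplacian}: combining its Lemma~\ref{lemma:bound-diam-grad-l1-norm-grad} (which bounds $\diam(\partial\phi(B(x,\eta)))$ by the $\L^1$-oscillation of $\nabla\phi$ on $B(x,4\eta)$) with Poincar\'e--Wirtinger and the inequality $\nr{M}_{1,1}\leq d\,\tr(M)$ for PSD matrices yields $\int_{B(x,4\eta)}\Delta\phi \gtrsim \alpha\,\eta^{d-1}$, which is summed over $Z$. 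You instead bucket the $x_i$'s by the coordinate direction of the subdifferential jump $y_i^+-y_i^-$ and claim $\partial_{11}\phi(B(x_i,2\eta))\gtrsim (\alpha/\sqrt d)\,\eta^{d-1}$ within the $e_1$ bucket.

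That coordinatewise lower bound does not hold. In $d=2$, let $K\gg 1$ and
\[
\phi(x_1,x_2)=\max(x_1+Kx_2,\,0)+\max(x_1-Kx_2,\,0).
\]
Then $\partial\phi(0)=\{(t+s,(t-s)K):t,s\in[0,1]\}$, so $y^+=(2,0)$, $y^-=(0,0)$ is a valid witness for any $\alpha\leq 2$, and since $y^+-y^-=(2,0)$ your rule puts the origin in the $e_1$ bucket. The singular set of $\phi$ consists of the two lines $\{x_1=\mp Kx_2\}$; across each the jump of $\nabla\phi$ has magnitude $\lambda=\sqrt{1+K^2}$ and unit normal $\nu$ with $|\nu_1|=1/\sqrt{1+K^2}$, so the singular part of $\partial_{11}\phi$ has surface density $\lambda\nu_1^2=1/\sqrt{1+K^2}$ and hence $\partial_{11}\phi(B(0,2\eta))=8\eta/\sqrt{1+K^2}\to 0$ as $K\to\infty$, while $(\alpha/\sqrt 2)\,\eta$ stays fixed. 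Equivalently, the set of $z$ for which the $e_1$-line $L_z$ picks up a jump of $\partial_1\phi$ inside $B(0,2\eta)$ has measure only $\sim\eta/K$, not $\sim\eta$, so the slicing/monotonicity step you flag as the ``main obstacle'' would have to produce a false conclusion. The mechanism is structural: a crease with normal $\nu$ contributes $\lambda\nu_1$ to the $e_1$-jump of the subdifferential but only $\lambda\nu_1^2$ to $\partial_{11}\phi$ (and the set of $e_1$-lines hitting it scales like $|\nu_1|$), so the ratio degenerates as $|\nu_1|\to 0$ even though the coordinate-free quantity $\Delta\phi=\sum_k\partial_{kk}\phi$ does not. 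Bucketing by the direction of $y^+-y^-$ therefore cannot, in general, identify a direction carrying the required $\partial_{ee}\phi$ mass, and one needs the coordinate-free route the paper takes.
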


As a corollary to this result, we get the following estimate that will prove useful in Section~\ref{sec:stability-pushforwards} for the study of the stability of the pushforward operation by an optimal transport map.
\begin{corollary} \label{cor:estimate-grad-phi} Let $\phi: \Rsp^d \to\Rsp$ be a convex and Lipschitz continuous function. Then for any $\eta, R > 0$ and $q>1$, 
\begin{equation*}
    \int_{B(0, R)} \diam(\partial \phi(B(x, \eta)))^q \dd x \leq c_{d, q, R, \eta} \Lip(\phi)^q \eta,
\end{equation*}
with $c_{d, q, R, \eta} = 48 d^2 \beta_d  2^{3d + q-1} \frac{q}{q-1}(R+4\eta)^{d-1}$, where $\beta_d$ denotes the volume of the unit ball of $\Rsp^d$.
\end{corollary}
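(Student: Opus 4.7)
The plan is to combine the covering number estimate of Theorem~\ref{th:size-singular-sets-cvx-lip} with the layer-cake formula. Write $f(x) := \diam(\partial \phi(B(x,\eta)))$. Because $\phi$ is $\Lip(\phi)$-Lipschitz continuous, every element of $\partial\phi(z)$ lies in $B(0,\Lip(\phi))$, so $0 \le f(x) \le 2\Lip(\phi)$ on $\Rsp^d$. I would then recast the target quantity as
$$ \int_{B(0,R)} f(x)^q \, \dd x \;=\; \int_0^{2\Lip(\phi)} q\,\alpha^{q-1}\, \vol\bigl(\{x \in B(0,R) : f(x) \ge \alpha\}\bigr)\,\dd\alpha, $$
noting that the superlevel set $\{f \ge \alpha\}$ is exactly $\Sigma_{\eta,\alpha}$ from Theorem~\ref{th:size-singular-sets-cvx-lip}. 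This reduces the problem to controlling the Lebesgue measure of $\Sigma_{\eta,\alpha} \cap B(0,R)$, which is exactly what that theorem delivers, after one cheap upgrade from covering numbers to volume.

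The upgrade is immediate: any covering by $N$ balls of radius $8\eta$ yields the volume bound $N\beta_d(8\eta)^d$, so the covering estimate gives
$$ \vol(\Sigma_{\eta,\alpha} \cap B(0,R)) \;\le\; 8^d \beta_d\, c_{d,R,\eta}\, \frac{\Lip(\phi)\,\eta}{\alpha}. $$
Plugging this into the layer-cake identity reduces the whole estimate to the elementary integral $\int_0^{2\Lip(\phi)} \alpha^{q-2}\,\dd\alpha = (2\Lip(\phi))^{q-1}/(q-1)$, which is finite precisely because $q>1$ --- that is the sole role of the assumption.

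The final step is mere bookkeeping: multiplying the factors $8^d\beta_d\, c_{d,R,\eta}\,\Lip(\phi)\,\eta$, $q$, and $(2\Lip(\phi))^{q-1}/(q-1)$, substituting $c_{d,R,\eta}=48 d^2(R+4\eta)^{d-1}$, and consolidating $8^d\cdot 2^{q-1} = 2^{3d+q-1}$ produces the announced constant $c_{d,q,R,\eta}=48 d^2 \beta_d\, 2^{3d+q-1}\tfrac{q}{q-1}(R+4\eta)^{d-1}$. I do not expect any genuine obstacle beyond this assembly; Theorem~\ref{th:size-singular-sets-cvx-lip} has already absorbed all of the geometry. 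The one subtle point worth noting is that the bound $\vol(\Sigma_{\eta,\alpha}\cap B(0,R)) \lesssim \eta/\alpha$ is applied all the way down to $\alpha = 0$ even though it becomes worse than the trivial bound $\vol(B(0,R))$ there; this is harmless because the weight $\alpha^{q-2}$ is integrable at the origin for every $q>1$, so no splitting of the $\alpha$-integral is needed.
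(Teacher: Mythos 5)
Your proposal is correct and follows essentially the same route as the paper: apply the layer-cake formula, bound the volume of the superlevel set $\Sigma_{\eta,\alpha}\cap B(0,R)$ via the covering number estimate of Theorem~\ref{th:size-singular-sets-cvx-lip} multiplied by $\beta_d(8\eta)^d$, and integrate; your use of $\int f^q = q\int_0^{\infty}\alpha^{q-1}\vol(\{f\geq\alpha\})\,\dd\alpha$ versus the paper's $\int f^q = \int_0^{\infty}\vol(\{f^q\geq t\})\,\dd t$ is just the substitution $t=\alpha^q$, and the remaining bookkeeping matches the stated constant.
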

\begin{proof}
    From Theorem~\ref{th:size-singular-sets-cvx-lip}, we directly get
    \begin{align*}
        \int_{B(0, R)} \diam(\partial \phi(B(x, \eta)))^q \dd x &= \int_0^{\infty} \abs{ \{ x \in B(0,R) \mid \diam( \partial \phi( B(x, \eta) ) )^q \geq t \} } \dd t \\
        &\leq \int_0^{(2 \Lip(\phi))^q} 48 d^2 (R+4\eta)^{d-1} \frac{\Lip(\phi)}{t^{1/q} \eta^{d-1}} \beta_d (8 \eta)^d \dd t \\
        &= c_{d, q, R, \eta} \Lip(\phi)^q \eta. \qedhere
    \end{align*}
\end{proof}

\begin{remark}[Singular sets of a convex Lipschitz function]
\label{rk:alberti}
    For $k \geq 1$, the $k$-singular set $\Sigma^k$ of $\phi : \Rsp^d \to \Rsp$ corresponds to the set of points $x$ in $\Rsp^d$ such that the Hausdorff dimension of $\partial\phi(x)$ is greater than or equal to $k$. The fact that the $k$-singular set of a convex Lipschitz function $\phi : \Rsp^d \to \Rsp$ is countably $\Haus^{d-k}$-rectifiable was established by Alberti, Ambrosio and Cannarsa in \cite{Alberti1992}. They also established the following estimate on the size of $\Sigma^k$:
    \begin{equation*}
        \int_{\Sigma^k \cap B(0, R)} \Haus^k(\partial \phi(x)) \dd \Haus^{d-k}(x) \leq c_d \left(\Lip(\phi) + 2R \right)^d,
    \end{equation*}
    where $c_d$ is a dimensional constant. With the notation of Theorem~\ref{th:size-singular-sets-cvx-lip}, taking $k=1$ in this estimate and using Markov's inequality allows to get the bound
    $$ \Haus^{d-1}(\Sigma_\alpha\cap B(0,R))\leq c_d \frac{\left(\Lip(\phi) + 2 R\right)^d}{\alpha}, $$
    that is similar in spirit to the bound we present in Theorem~\ref{th:size-singular-sets-cvx-lip}. However, the approach in \cite{Alberti1992} does not give an estimate on the covering numbers of $\Sigma_{\eta, \alpha}$, 
    which may prove necessary in specific contexts (see for instance the proof of 
    Theorem~\ref{th:stability-pushforwards} in the next section that relies on 
    Corollary~\ref{cor:estimate-grad-phi}). In this sense, the quantitative estimate of Theorem~\ref{th:size-singular-sets-cvx-lip} can be seen as a refinement of the estimate from \cite{Alberti1992} on the size of the set of non-differentiability points of a convex Lipschitz function on a bounded domain.
\end{remark}

\begin{remark}[Tightness]
    The bounds presented in Theorem~\ref{th:size-singular-sets-cvx-lip} are tight. Indeed, in dimension $d=1$, let $N \in \Nsp^*$ and $L, R > 0$ and define on $\Rsp$ the function
    $$ \xi :x \mapsto \max_{i=0, \dots, N} \left(\frac{2i}{N}-1\right)L x + \frac{2 L R}{N(N+1)}i(N-i).$$
    Then $\xi$ is convex and $L$-Lipschitz continuous (see Figure~\ref{fig:graph-xi} for an illustration of the graph of $\xi$ when $N=4$). Moreover, denoting $x_i = \left(\frac{2i}{N+1} - 1 \right)R$ for all $i$ in $\{1, \dots, N\}$, this function satisfies for all such $i$
    $$ \partial \xi (x_i) = \left[ \left(\frac{2i}{N}-1\right)L, \left(\frac{2(i+1)}{N}-1\right)L\right], $$
    and $\xi$ is differentiable everywhere else in $\Rsp \setminus \{x_i\}_{\{1 \leq i \leq N\}}$. In particular, setting $\alpha = \frac{2 L}{N}$, one can observe that for $\eta > 0$,
    \begin{align*}
    \Sigma_{\eta,\alpha} &= \{ x \in \Rsp \mid \diam(\partial\xi(B(x,\eta)) \geq \alpha\} = \bigcup_{1 \leq i \leq N} [x_i -\eta; x_i+ \eta], \\
    \text{and }\Sigma_\alpha &= \{ x\in \Rsp \mid \diam(\partial\xi(x)) \geq \alpha\} = \{ x_i \mid 1 \leq i \leq N\}, \\
    \end{align*}
    so that for $\eta \in (0, \frac{R}{8(N+1)})$, $$\Haus^{0}(\Sigma_\alpha\cap [-R, R]) = \NCov(\Sigma_{\eta,\alpha} \cap [-R, R], 8\eta) = N = \frac{2 L}{\alpha},$$
    which shows the tightness of the bounds of Theorem~\ref{th:size-singular-sets-cvx-lip} in dimension $d = 1$. In dimension $d \geq 1$, one may generalize this example by defining 
    $$ \phi :x \mapsto \xi(x^1),$$
    where $x^1$ is the projection of $x \in \Rsp^d$ on its first coordinate. Then, with the notations of Theorem~\ref{th:size-singular-sets-cvx-lip}, there are dimensional constants $c_d, \tilde{c_d}$ such that the convex and $L$-Lipschitz continuous function $\phi$ verifies for $\alpha = \frac{2 L}{N}$ and $\eta \in (0, \frac{R}{8(N+1)})$:
    \begin{align*}
    \NCov(\Sigma_{\eta,\alpha} \cap B(0,R), 8\eta) &\geq c_d \frac{R^{d-1}}{\eta^{d-1}} \frac{2 L}{\alpha} \\
    \Haus^{d-1}(\Sigma_\alpha\cap B(0,R)) &\geq \tilde{c_d} R^{d-1} \frac{2 L}{\alpha}. 
    \end{align*}
    These last inequalities also show the tightness of the bounds of Theorem~\ref{th:size-singular-sets-cvx-lip} with respect to $R$ and $\Lip(\phi)$. In comparison to the estimate obtained on the $(d-1)$-Hausdorff measure of $\Sigma_\alpha \cap B(0, R)$ deduced from \cite{Alberti1992} in Remark~\ref{rk:alberti}, this tightness corresponds to yet another refinement of the estimates from \cite{Alberti1992}.
\end{remark}

\begin{figure}
    \centering
    \includegraphics[scale=0.55]{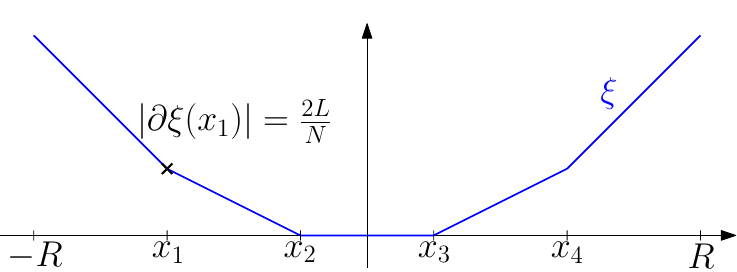}
    \caption{Graph of $\xi$ for $N=4$.}
    \label{fig:graph-xi}
\end{figure}

The proof of Theorem \ref{th:size-singular-sets-cvx-lip} uses the following lemma, similar to \cite[Lemma 3.2]{entropic-barycenters}, and whose proof is postponed after the proof of Theorem~\ref{th:size-singular-sets-cvx-lip}. % deferred to the appendix.

\begin{lemma}
\label{lemma:bound-diam-grad-l1-norm-grad}
Let $\phi$ be a convex function over $\Rsp^d$. Then for any $x \in \Rsp^d$ and $\eta>0$,
$$ \diam( \partial \phi(B(x, \eta)) ) \leq \frac{12}{\beta_d \eta^d}  \nr{\nabla \phi}_{\L^1(B(x, 4\eta))}, $$
where $\beta_d$ denotes the volume of the unit ball of $\Rsp^d$.
\end{lemma}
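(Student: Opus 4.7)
\emph{Plan.} My approach is to reduce the $d$-dimensional estimate to a one-dimensional one by slicing along the direction $e:=(y^+-y^-)/|y^+-y^-|$, where $y^\pm\in\partial\phi(x^\pm)$ with $x^\pm\in B(x,\eta)$ are chosen so that $|y^+-y^-|\geq D-\varepsilon$ and $D:=\diam(\partial\phi(B(x,\eta)))$. Monotonicity of $\partial\phi$ forces $\langle x^+-x^-,e\rangle\geq 0$, so $x^+$ lies ``ahead'' of $x^-$ along $e$.

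The one-dimensional ingredient I plan to use is: for any convex $\psi:\Rsp\to\Rsp$ and $a>0$,
\begin{equation*}
\psi'_+(a)-\psi'_-(-a)\;\leq\;\tfrac{1}{3a}\int_{-4a}^{4a}|\psi'(t)|\,\dd t.
\end{equation*}
This follows from a brief sign case analysis: if $\psi'_+(a)\geq 0$, then by convexity $\psi(4a)-\psi(a)\geq 3a\,\psi'_+(a)$ and monotonicity of $\psi'$ forces $\psi'\geq 0$ on $[a,4a]$, whence $\int_a^{4a}|\psi'|\geq 3a\,\psi'_+(a)$; a symmetric argument bounds $-\psi'_-(-a)$ when $\psi'_-(-a)\leq 0$; in the remaining sign configurations one of the two summands is non-positive and the other inequality already suffices.

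I would then apply the 1D bound to $\psi_u(t):=\phi(c+u+te)$ for $u$ ranging over a transverse disc $B^\perp(r)\subset e^\perp$, with the base point $c$ chosen on a ``separating hyperplane'' perpendicular to $e$ lying between $x^-$ and $x^+$ (possible because $e$ is aligned with $y^+-y^-$) and with $r,a$ tuned so that the cylinder $\{c+u+te:u\in B^\perp(r),\,|t|\leq 4a\}$ is contained in $B(x,4\eta)$, which amounts to $r^2+16a^2\leq 16\eta^2$. Integrating over $u$ and using Fubini together with $|\langle\nabla\phi,e\rangle|\leq|\nabla\phi|$ yields
\begin{equation*}
\int_{B^\perp(r)}\!\!\big[\phi'_+(c+u+ae;e)-\phi'_-(c+u-ae;e)\big]\,\dd u\;\leq\;\tfrac{1}{3a}\,\nr{\nabla\phi}_{\L^1(B(x,4\eta))}.
\end{equation*}

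The main obstacle will be the matching lower bound on the left-hand side integrand: I need $\phi'_+(c+u+ae;e)-\phi'_-(c+u-ae;e)\gtrsim D-\varepsilon$ for a set of $u$ of $(d-1)$-volume comparable to $\eta^{d-1}$. The plan is to exploit monotonicity of $\partial\phi$ along direction $e$ together with the memberships $x^\pm\in\partial\phi^*(y^\pm)$ to propagate the subgradient estimates from the single points $x^\pm$: with $c$ on the separating hyperplane, $\phi'_+(z;e)\geq\langle y^+,e\rangle$ should hold on ``the $y^+$-side'' $\{\langle z-c,e\rangle\geq a\}$ (at least for $|z-c|_\perp$ not too large), and symmetrically $\phi'_-(z;e)\leq\langle y^-,e\rangle$ on the other side. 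Once this is established, integrating over $u\in B^\perp(r)$ gives a left-hand side of order $\beta_{d-1}r^{d-1}(D-\varepsilon)$, and optimizing $r$ and $a$ against the geometric constraint $r^2+16a^2\leq 16\eta^2$ followed by $\varepsilon\to 0$ produces the stated bound $D\leq\tfrac{12}{\beta_d\eta^d}\nr{\nabla\phi}_{\L^1(B(x,4\eta))}$.
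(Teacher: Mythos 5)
The slicing framework (1D estimate $\psi'_+(a)-\psi'_-(-a)\le\frac{1}{3a}\int_{-4a}^{4a}|\psi'|$, Fubini over a transverse disc) is a reasonable route, and your 1D ingredient does hold. The gap is exactly where you flagged it: the propagation claim $\phi'_+(z;e)\ge\langle y^+,e\rangle$ on $\{\langle z-c,e\rangle\ge a\}$ does not follow from monotonicity of $\partial\phi$, and it is false. Monotonicity gives $\langle \nabla\phi(z)-y^+,\,z-x^+\rangle\ge 0$, which only controls the component of $\nabla\phi(z)-y^+$ along the \emph{displacement} $z-x^+$, not along $e$. Concretely, take $\phi=\nr{\cdot}$ and $x^+=x^-=0$ (a singular point; here $\langle x^+-x^-,e\rangle=0$, so there is not even a separating hyperplane). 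With $y^\pm=\pm e_1$, $e=e_1$, $D=2$, and $z=(a,s)$, one has $\phi'_+(z;e)=a/\sqrt{a^2+s^2}\to 0$ as $|s|\to\infty$, so the claimed lower bound fails for all $s$ of order $\ge a$. The correct quantitative statement one can extract is
$$ \phi'_+(x^++u+ae;e)\ \ge\ \langle y^+,e\rangle-\frac{2\Lip(\phi)\,|u|}{a}\qquad(u\in e^\perp), $$
so the lower bound by $\gtrsim D$ only holds for $|u|\lesssim aD/\Lip(\phi)$, a set whose $(d-1)$-measure depends on $D$ itself. Feeding this through your optimization gives $D^d\lesssim \Lip(\phi)^{d-1}\,\eta^{-d}\nr{\nabla\phi}_{\L^1(B(x,4\eta))}$, which is strictly weaker than the target $D\lesssim \eta^{-d}\nr{\nabla\phi}_{\L^1(B(x,4\eta))}$ (since the right-hand side of the target is always $\lesssim\Lip(\phi)$), and this weaker exponent would not yield the covering-number estimate of Theorem~\ref{th:size-singular-sets-cvx-lip} nor Corollary~\ref{cor:estimate-grad-phi}.

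The paper avoids the slicing direction entirely. It first uses the crude bound $\diam(\partial\phi(B(x,\eta)))\le 2\nr{\partial\phi}_{\L^\infty(B(x,\eta))}$, then controls the sup-norm by the scalar oscillation, $\nr{\partial\phi}_{\L^\infty(B(x,\eta))}\le\frac{1}{\eta}\osc_{B(x,2\eta)}(\phi)$, and finally bounds this oscillation by $\nr{\nabla\phi}_{\L^1(B(x,4\eta))}$ via a pointwise lower bound on $\nr{\nabla\phi}$ on a half-ball sitting between the argmin and argmax of $\phi$ in $B(x,2\eta)$. Working with the scalar oscillation sidesteps the directional propagation issue that sinks the slicing approach.
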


With Lemma~\ref{lemma:bound-diam-grad-l1-norm-grad} in hand, we are now ready to prove Theorem~\ref{th:size-singular-sets-cvx-lip}.

\begin{proof}[Proof of Theorem~\ref{th:size-singular-sets-cvx-lip}] 
Let $\Sigma = \Sigma_{\eta,\alpha}$, and let $Z\subseteq \Sigma$ be a maximal $\eps$-packing of $\Sigma$ with $\eps=4\eta$, i.e. a finite subset of $\Sigma$ satisfying $\forall y\neq z\in Z$, $B(y,\eps)\cap B(z,\eps) =\emptyset$ and which is maximal with respect to the inclusion in the class of subsets of $\Sigma$ satisfying this assumption. We denote by $N$ the cardinal number of $Z$.
For any $x \in Z$, Lemma~\ref{lemma:bound-diam-grad-l1-norm-grad} gives us for any $c \in \Rsp^d$
\begin{equation}
\label{eq:before-PW2}
    \alpha \leq \diam \left(\partial \phi(B(x, \eta))\right) \leq \frac{12}{\beta_d \eta^d}   \nr{\nabla \phi - c}_{\L^1(B(x, 4\eta))}.
\end{equation}
Choosing $c = \frac{1}{\abs{B(x, 4\eta)}} \int_{B(x, 4\eta)} \nabla \phi(u) \dd u$, the Poincaré-Wirtinger inequality then ensures
$$ \nr{\nabla \phi - c}_{\L^1(B(x, 4\eta))} \leq 4\eta \int_{B(x, 4\eta)} \nr{\DD^2 \phi(u)}_{1,1} \dd u. $$
Using that for any positive semi-definite $d \times d$ matrix $M$, $\nr{M}_{1,1} \leq d \tr(M)$, we then have 
$$ \nr{\nabla \phi - c}_{\L^1(B(x, 4\eta))} \leq 4\eta d \int_{B(x, 4\eta)} \Delta \phi(u) \dd u, $$
where $\Delta$ stands for the Laplace operator. Injecting this last bound into \eqref{eq:before-PW2} yields
\begin{equation*}
    \alpha \leq \frac{48 d}{\beta_d} \frac{1}{\eta^{d-1}}  \int_{B(x, 4\eta)} \Delta \phi(u) \dd u,
\end{equation*}
Summing the last bound over $x \in Z$ and using that the balls of radius $4\eta\leq \eps$ centered at points of $Z$ do not intersect, we get
\begin{align*}
    \alpha N &\leq \frac{48 d}{\beta_d} \frac{1}{\eta^{d-1}} \sum_{x \in Z} \int_{B(x, 4 \eta)} \Delta \phi(u) \dd u \\
    &\leq \frac{48 d}{\beta_d} \frac{1}{\eta^{d-1}} \int_{B(0,R+4\eta)} \Delta \phi(u) \dd u \\
    &\leq \frac{48 d}{\beta_d} \omega_{d-1} (R+4\eta)^{d-1} \frac{\Lip(\phi)}{\eta^{d-1}}
%    &\lesssim \frac{48}{\omega_d} \frac{1}{r^{d-1}} \int_{\partial \B(0,R+4r)} \sca{\nabla \phi(u)}{n_u} \dd u,
\end{align*}
where we used an integration by part to get the last inequality and where $\omega_{d-1} = d \beta_d$ denotes the surface area of the $(d-1)$-unit sphere. Finally, we can easily check that $Z$ is a $2\eps$-covering of $\Sigma_{\alpha,\eta}$, implying the first bound of the statement.

To prove the second inequality, first note that $\Sigma_\alpha \subseteq \Sigma_{\alpha,\eta}$, so that for any 
$\eta \leq R$ one has
$$ \NCov(\Sigma_\alpha \cap B(0,R), \eta) \leq c_d \frac{\Lip(\phi) R^{d-1}}{\alpha \eta^{d-1}}. $$
We conclude using $\Haus^{d-1}(X) \leq c_d \liminf_{\eta\to 0} \eta^{d-1} \NCov(X,\eta)$, where $c_d$ is a dimensional constant.
\end{proof}

We finally prove Lemma~\ref{lemma:bound-diam-grad-l1-norm-grad}.
\begin{proof}[Proof of Lemma~\ref{lemma:bound-diam-grad-l1-norm-grad}]
Let $x \in \Rsp^d$ and $\eta>0$. One has by definition:
\begin{align*}
    \diam( \partial \phi(B(x, \eta)) ) &= \sup_{y, y' \in B(x, \eta)} \sup_{g \in \partial \phi(y), g' \in \partial \phi(y')} \nr{g - g'} \\
    &\leq \sup_{y, y' \in B(x, \eta)} \sup_{g \in \partial \phi(y), g' \in \partial \phi(y')} \nr{g} + \nr{g'} \\
    &= 2 \sup_{y \in B(x, \eta)} \sup_{g \in \partial \phi(y)} \nr{g} \\
    &= 2 \nr{\partial \phi}_{\L^\infty(B(x, \eta))}.
\end{align*}
But for any $y, y' \in \Rsp^d$ and $g \in \partial \phi(y)$, the convexity of $\phi$ entails
$$ \sca{g}{y' - y} \leq \abs{\phi(y') - \phi(y)}. $$
Therefore, choosing $y \in B(x, \eta)$ and $g \in \partial \phi(y)$ such that $\nr{\partial \phi}_{\L^\infty(B(x, \eta))} = \nr{g}$, one has for $y' = y + \eta \frac{g}{\nr{g}} \in B(y,\eta) \subset B(x, 2\eta)$ the following bound:
$$ \eta \nr{g} \leq \abs{\phi(y') - \phi(y)} \leq \osc_{B(x, 2\eta)}( \phi ), $$
where $\osc_K(f) = \sup_{u, v \in K} \abs{f(u) - f(v)}$. We thus have shown
\begin{equation}
    \label{eq:first-bound-diam-osc}
    \diam( \partial \phi(B(x, \eta)) ) \leq \frac{2}{\eta} \osc_{B(x, 2\eta)}( \phi ).
\end{equation}
We conclude exactly as in the proof of Lemma 3.2 of \cite{entropic-barycenters}, that we report here only for completeness: let $y_0 \in \arg\min_{B(x, 2\eta)} \phi, y_1 \in \arg\max_{B(x, 2\eta)} \phi, g_1 \in \partial \phi(y_1)$. Then by convexity of $\phi$, for any $y \in \Rsp^d$ and $g \in \partial \phi(y)$ one has
$$ \phi(y_1) + \sca{g_1}{y - y_1} \leq \phi(y) \leq \phi(y_0) + \sca{g}{y - y_0}. $$
It follows that
$$ \nr{g} \geq \frac{ \osc_{B(x, 2\eta)}(\phi) + \sca{g_1}{y - y_1} }{ \nr{y - y_0} }. $$
Introducing $W_\eta(y_1, g_1) = \{ y \in B(y_1, 2\eta) \vert \sca{g_1}{y - y_1} \geq 0\} \subset B(x, 4\eta)$, one then has
\begin{align*}
    \nr{\nabla \phi}_{\L^1(B(x, 4\eta))} &\geq \int_{W_\eta(y_1, g_1)} \nr{\nabla \phi} \dd y \\
    &\geq \int_{W_\eta(y_1, g_1)} \frac{ \osc_{B(x, 2\eta)}(\phi) }{ \nr{y - y_0} } \dd y \\
    &\geq \osc_{B(x, 2\eta)}(\phi) \int_{W_\eta(y_1, g_1)} \frac{1}{\nr{y - y_1} + \nr{y_1 - y_0}} \dd y \\
    &\geq \frac{ \osc_{B(x, 2\eta)}(\phi) }{6 \eta} \int_{B(y_1 + \eta \frac{g_1}{\nr{g_1}}, r)} \dd y \\
    &= \beta_d \frac{\eta^{d-1}}{6} \osc_{B(x, 2\eta)}(\phi),
\end{align*}
where $\beta_d$ denotes the volume of the unit ball of $\Rsp^d$ and where we used the fact that $B(y_1 + \eta \frac{g_1}{\nr{g_1}}, \eta) \subset W_\eta(y_1, g_1)$. Plugging this last bound into \eqref{eq:first-bound-diam-osc} finally yields
\begin{equation*}
    \diam( \partial \phi(B(x, \eta)) ) \leq \frac{12}{\omega_d \eta^d}  \nr{\nabla \phi}_{\L^1(B(x, 4\eta))}. \qedhere
\end{equation*}

\end{proof}

\section{Stability of the pushforward by an optimal transport map} % under the gradient of a convex potential}
%\section{Stability~of~the~pushforward~operation by~an~optimal~transport~map} % under the gradient of a convex potential}
\label{sec:stability-pushforwards}

\subsection{Optimal transportation problem} We start this section by recalling some facts about 
the optimal transport problem with a general ground cost and discuss the existence and properties 
of optimal transport maps. 

\subsubsection{Primal and dual formulations.} Let $\Omega = B(0, R)$ be the open ball of $\Rsp^d$ centered at zero and of radius $R > 0$. For $\rho, \mu \in \Prob(\Omega)$ two probability measures supported over $\Omega$, Kantorovich's formulation of the optimal transport problem between $\rho$ and $\mu$ with respect to a continuous cost function $c : \Rsp^d \times \Rsp^d \to \Rsp$ corresponds to the following minimization problem:
\begin{equation}
\label{eq:ot-primal}
    \inf_{\Gamma(\rho, \mu)} \int_{\Omega \times \Omega} c(x,y) \dd \gamma(x,y).
\end{equation}
In this problem, the optimization is over the set $\Gamma(\rho, \mu)$ of couplings (or transport plans) between $\rho$ and $\mu$, i.e. the set of probability measures over $\Omega \times \Omega$ with first marginal $\rho$ and second marginal $\mu$. It is well-known (see e.g. Chapter 1 of \cite{santambrogio2015optimal}) that problem \eqref{eq:ot-primal} always admits a minimizer (possibly non-unique) and that it enjoys the following dual formulation, holding with strong-duality:
\begin{equation}
    \label{eq:ot-dual}
    \sup_{\varphi : \Omega \to \Rsp} \int_\Omega \varphi \dd \rho + \int_\Omega \varphi^c \dd \mu,
\end{equation}
where $\varphi^c(\cdot) = \inf_{x \in \Omega} c(x, \cdot) - \varphi(x)$ corresponds to the $c$-transform of $\varphi$. In turn, problem \eqref{eq:ot-dual} always admits a maximizer $\varphi$ (non-unique), which is referred to as a \emph{Kantorovich potential} and which must verify $(\varphi^c)^{\bar{c}} = \varphi$, where $\psi^{\bar{c}}(\cdot) = \inf_{y \in \Rsp^d} c(\cdot, y) - \psi(y)$ is a $\bar{c}$-transform.

\subsubsection{Wasserstein distances.} Whenever the cost function corresponds to the $p$-cost $c(x,y) = \xi_p(x-y)$ where $\xi_p(z) = \nr{z}^p$ for some $p \geq 1$, the $p$-th root of the value of problem \eqref{eq:ot-primal} defines the $p$-Wasserstein distance between the probability measures $\rho$ and $\mu$, denoted $\Wass_p(\rho, \mu)$. Wasserstein distances come with strong geometrical and physical interpretations that have made their success in many theoretical and applied contexts, see e.g. \cite{villani2008optimal, santambrogio2015optimal, comp_OT} for references.

When $p \geq 2$, the $p$-cost satisfies some immediate but strong regularity properties that we will exploit. In the following statement (whose proof can be found in the appendix), $f$ is said $\lambda$-concave with $\lambda \in \Rsp$ if $f + \frac{\lambda}{2}\nr{\cdot}^2$ is a concave function.

\begin{lemma}[Properties of $p$-cost] \label{lemma:pties-p-cost}
Let $p \geq 2$. On $\Omega = B(0, R)$, the mapping $z \mapsto \xi_p(z) = \nr{z}^p$ is strictly convex, of class $\Class^2$, $(p R^{p-1})$-Lipschitz continuous and  $(-p(p-1) R^{p-2})$-concave. The mapping $z \mapsto (\nabla \xi_p)^{-1}(z)$ is well-defined: for any $z \in \Rsp^d \setminus \{0\}$, %$(\nabla \xi_p)^{-1}(z) = z / ( p^{\frac{1}{p-1}} \nr{z}^{1 - \frac{1}{p-1}} )$ 
\begin{gather*}
    (\nabla \xi_p)^{-1}(z) = \frac{1}{p^{\frac{1}{p-1}} \nr{z}^{\frac{p-2}{p-1}} } z,
\end{gather*}
and $ (\nabla \xi_p)^{-1}(0) = 0$. In particular, $(\nabla \xi_p)^{-1}$ is $\frac{1}{p-1}$-Hölder continuous:
\begin{equation*}
    \forall x, y \in \Omega, \quad  \nr{(\nabla \xi_p)^{-1}(y) - (\nabla \xi_p)^{-1}(x)} \leq \frac{3}{p^{\frac{1}{p-1}}} \nr{y - x}^{\frac{1}{p-1}}.
\end{equation*}
\end{lemma}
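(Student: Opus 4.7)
The plan is to split the lemma into two parts: the elementary regularity of $\xi_p$ on $\Omega$, and the explicit inversion and Hölder estimate for $\nabla\xi_p$.

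For the regularity, I would compute the gradient $\nabla\xi_p(z) = p\nr{z}^{p-2} z$ (with the convention $\nabla\xi_p(0) = 0$, which is consistent since $p \geq 2$) and, away from the origin, the Hessian
\[
\DD^2 \xi_p(z) = p\nr{z}^{p-2} I + p(p-2)\nr{z}^{p-4} z z^\top.
\]
Both expressions extend continuously to the origin when $p \geq 2$, so $\xi_p \in \Class^2(\Rsp^d)$. The eigenvalues of $\DD^2\xi_p(z)$ are $p\nr{z}^{p-2}$ (with multiplicity $d-1$, in the orthogonal complement of $z$) and $p(p-1)\nr{z}^{p-2}$ (in the direction of $z$), both nonnegative and strictly positive for $z \neq 0$, which gives strict convexity on $\Omega$. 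The bound $\nr{\nabla\xi_p(z)} = p\nr{z}^{p-1} \leq pR^{p-1}$ gives the Lipschitz estimate, and the upper bound $\DD^2\xi_p(z) \preceq p(p-1) R^{p-2} I$ on $\Omega$ immediately yields $(-p(p-1)R^{p-2})$-concavity.

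Inverting $\nabla\xi_p$ amounts to solving $w = p\nr{z}^{p-2} z$ for $z \in \Rsp^d$. Taking norms gives $\nr{w} = p\nr{z}^{p-1}$, hence $\nr{z} = (\nr{w}/p)^{1/(p-1)}$, and because $z$ must be parallel to $w$, substitution yields the announced closed form for $(\nabla\xi_p)^{-1}$, with $(\nabla\xi_p)^{-1}(0) = 0$ by continuity.

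For the Hölder estimate, set $\alpha = 1/(p-1) \in (0,1]$ and $F = (\nabla\xi_p)^{-1}$, so that $F(w) = p^{-1/(p-1)} \nr{w}^{\alpha-1} w$. Writing $x = ru$, $y = sv$ with $r = \nr{x}$, $s = \nr{y}$ and $u, v$ unit vectors (chosen arbitrarily when the corresponding point is zero), the polar decomposition gives
\[
\nr{F(x) - F(y)}^2 = p^{-2/(p-1)}\bigl((r^\alpha - s^\alpha)^2 + r^\alpha s^\alpha \nr{u-v}^2\bigr),
\]
together with $\nr{x-y}^2 = (r-s)^2 + rs \nr{u-v}^2$. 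The radial part is handled by subadditivity of $t \mapsto t^\alpha$: $|r^\alpha - s^\alpha| \leq |r-s|^\alpha \leq \nr{x-y}^\alpha$. For the angular part, I would write $r^\alpha s^\alpha \nr{u-v}^2 = (rs)^\alpha \nr{u-v}^{2\alpha} \cdot \nr{u-v}^{2-2\alpha}$, then use the lower bound $\nr{x-y}^{2\alpha} \geq (rs)^\alpha \nr{u-v}^{2\alpha}$ (obtained by raising $\nr{x-y}^2 \geq rs\nr{u-v}^2$ to the power $\alpha$) and the trivial $\nr{u-v}^{2-2\alpha} \leq 2^{2-2\alpha} \leq 4$. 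Summing both estimates yields $\nr{F(x) - F(y)} \leq \sqrt{5}\, p^{-1/(p-1)} \nr{x-y}^{1/(p-1)}$, which is stronger than the stated bound with constant $3$.

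The main obstacle is the Hölder estimate: since $\DD F$ blows up at the origin whenever $p > 2$, a naive mean-value argument along straight segments fails. The polar decomposition is the correct tool because it cleanly separates the radial contribution — tamed by the scalar subadditivity of $t \mapsto t^\alpha$ — from the angular contribution, which is controlled by combining the trivial bound $\nr{u-v} \leq 2$ with the lower bound $\nr{x-y}^2 \geq rs\nr{u-v}^2$. The degenerate case where one of $x$ or $y$ vanishes is automatically absorbed into the same formalism.
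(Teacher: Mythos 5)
Your proposal is correct. The first part (gradient, Hessian, Lipschitz and semi-concavity bounds) is essentially what the paper does; the only cosmetic difference is that you read off the Hessian's eigenvalues directly where the paper applies Cauchy–Schwarz to $v^\top \DD^2\xi_p(z)\,v$. (One small point worth making explicit: for $p>2$ the Hessian vanishes at $0$, so positive definiteness away from the origin does not immediately give strict convexity on all of $\Omega$; you need the one-line observation that any nontrivial segment meets $\{0\}$ in at most one point, or fall back on the paper's argument via the triangle inequality plus strict convexity of $u\mapsto u^p$ on $\Rsp_+^*$.)

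For the Hölder estimate your route genuinely differs from the paper's. The paper works on the \emph{vectors} themselves: it inserts the intermediate point $\frac{\nr{x}}{\nr{y}}y$, treats the two special cases (positively parallel; equal norms) separately, and sums the three resulting terms via the triangle inequality, landing on the constant $3$. You instead work on the \emph{squared norms} via the exact polar identities
\[
\nr{r^\alpha u - s^\alpha v}^2 = (r^\alpha-s^\alpha)^2 + r^\alpha s^\alpha\nr{u-v}^2,
\qquad
\nr{x-y}^2 = (r-s)^2 + rs\nr{u-v}^2,
\]
control the radial term by subadditivity of $t\mapsto t^\alpha$ and the angular term by $rs\nr{u-v}^2 \le \nr{x-y}^2$ together with $\nr{u-v}\le 2$, and sum once at the end. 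This avoids the intermediate point and the case analysis, treats the degenerate case $x=0$ or $y=0$ uniformly, and yields the sharper constant $\sqrt{5}<3$ — both a cleaner derivation and a quantitative improvement over the paper's bound.
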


%The proof of this lemma is deferred to the appendix.

\subsubsection{Optimal transport maps.} By duality, one can observe that any $\gamma \in \Gamma(\rho, \mu)$ and $\varphi : \Omega \to \Rsp$ are respective solutions of problems \eqref{eq:ot-primal} and \eqref{eq:ot-dual} if and only if
\begin{equation}
\label{eq:primal-dual}
    \spt(\gamma) \subset \partial^c \varphi := \{ (x,y) \mid \varphi(x) + \varphi^c(y) = c(x,y)\},
\end{equation}
where $\spt(\gamma)$ denotes the support of $\gamma$. Incidentally, this observation allows to characterize cases of uniqueness of the solutions to problem \eqref{eq:ot-primal} depending on the choice of cost function $c$ and the assumptions made on the involved measures $\rho$ and $\mu$. Choose for instance the $p$-cost $c= \xi_p$ with $p \geq 2$ (see Section 1.3 of \cite{santambrogio2015optimal} for more general costs). Lemma~\ref{lemma:pties-p-cost} ensures that $\xi_p$ is Lipschitz continuous and  $\lambda$-concave with some explicit constants. These regularity properties are transmitted, with the same constants, to any Kantorovich potential $\varphi : \Omega \to \Rsp$ solution to \eqref{eq:ot-dual}. 
This follows from the fact that any such $\varphi$ corresponds to the $\bar{c}$-transform of the function $\varphi^c$. In turn, the Lipschitz behavior of $\xi_p$ and $\varphi$ allows to ensure their differentiability almost-everywhere using Rademacher's theorem. Consider now an optimal transport plan $\gamma$ minimizer of \eqref{eq:ot-primal} and a Kantorovich potential $\varphi$ maximizer of \eqref{eq:ot-dual}. The primal-dual relationship \eqref{eq:primal-dual} ensures that for any $(x_0, y_0) \in \spt(\gamma)$, the function $x \mapsto \xi_p(x-y_0) - \varphi(x)$ is minimized in $x_0$. Thus, almost-every $(x_0, y_0) \in \spt(\gamma)$ satisfies the optimality condition $\nabla \xi_p(x_0 - y_0) - \nabla \varphi(x_0) = 0$, which leads to
\begin{equation}
\label{eq:definition-ot-map}
    y_0 = T(x_0) := x_0 - (\nabla \xi_p)^{-1} (\nabla \varphi(x_0)).
\end{equation}
The mapping $T : \Omega \to \Omega$ is well defined almost-everywhere. These considerations show that if $\rho$ is absolutely continuous with respect to the Lebesgue measure, $\gamma$ is induced by the map $T$ defined in \eqref{eq:definition-ot-map}, i.e. $\gamma = (\id, T)_\# \rho$. Because the choices of $\gamma$ and $\varphi$ were not made depending on each other, these ideas also show the uniqueness of $\gamma$ and of $\nabla \varphi$. The map $T$ is referred to as the \emph{optimal transport map with respect to the ground cost $c = \xi_p$} in the transport between $\rho$ and $\mu$.

\subsection{Stability estimate for the pushforward operation} We now state our main result, that 
brings a tight answer to Problem~\ref{pb:statement}:

\begin{theorem}
\label{th:stability-pushforwards}
    Let $p \geq 2$ and consider the $p$-cost $c(x,y) = \xi_p(x-y) = \nr{x-y}^p$. Let $\rho, \tilde{\rho} \in \Prob(\Omega)$ where $\Omega = B(0, R)$ with $R > 0$. Assume that $\rho$ is absolutely continuous with density bounded from above by $M_\rho \in (0, +\infty)$. Let $\varphi \in \Class(\Omega)$ satisfying $\varphi = (\varphi^c)^{\bar{c}}$. Let $\tilde{\gamma} \in \Prob(\Omega \times \Omega)$ be such that $(p_1)_\# \tilde{\gamma} = \tilde{\rho}$ and assume that $\spt(\tilde{\gamma}) \subset \partial^c \varphi$. Introduce the optimal transport map $T_\varphi : \Omega \to \Omega$ which satisfies for almost-every $x \in \Omega$,
    $$ T_\varphi(x) = x - (\nabla \xi_p)^{-1} (\nabla \varphi(x)). $$
    Then, for any $q \in (p-1, \infty)$ and $r \in (1, \infty)$,
    $$ \Wass_q((T_\varphi)_\# \rho, (p_2)_\# \tilde{\gamma} ) \leq c_{d, q, p, R, M_\rho} \Wass_r(\rho, \tilde{\rho})^{\frac{r}{q(r+1)}}, $$
    where $c_{d, q, p, R, M_\rho} = 2^{8(d+1)} p^3  \left( \frac{q}{q - p + 1}\right)^{1/q} d^2(1 + \beta_d) (1+M_\rho)  (1 + R)^{2 + p + d}$, with $\beta_d$ denoting the volume of the unit ball of $\Rsp^d$. It also holds, with the same constant,
    $$ \Wass_q((T_\varphi)_\# \rho, (p_2)_\# \tilde{\gamma} ) \leq c_{d, q, p, R, M_\rho} \Wass_\infty(\rho, \tilde{\rho})^{\frac{1}{q}} .$$
\end{theorem}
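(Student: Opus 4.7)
The plan is to follow the strategy sketched at the end of the introduction, promoting the argument from the quadratic case to the general $p$-cost. First I would construct an explicit coupling between $(T_\varphi)_\#\rho$ and $(p_2)_\#\tilde{\gamma}$: fix an optimal $\Wass_r$-coupling $\pi\in\Gamma(\rho,\tilde{\rho})$ (not required to be induced by a map) and a disintegration $\tilde{\gamma} = \int \delta_y\otimes\tilde{\gamma}_y\,\d\tilde{\rho}(y)$. Then the measure
\[
\sigma = \int_{\Omega\times\Omega}\int_\Omega \delta_{(T_\varphi(x),z')}\,\d\tilde{\gamma}_y(z')\,\d\pi(x,y)
\]
has marginals $(T_\varphi)_\#\rho$ and $(p_2)_\#\tilde{\gamma}$, so $\Wass_q^q((T_\varphi)_\#\rho,(p_2)_\#\tilde{\gamma})$ is bounded above by $\int \nr{T_\varphi(x)-z'}^q\,\d\sigma$.

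Next I would establish a pointwise bound on $\nr{T_\varphi(x)-z'}$ when $\nr{x-y}\leq\eta$. The condition $\spt(\tilde{\gamma})\subset\partial^c\varphi$ combined with the $\Class^2$ regularity of $\xi_p$ forces, for each admissible $(y,z')$, the existence of $g_y\in\partial\varphi(y)$ (in the semiconcave sense) such that $z' = y - (\nabla\xi_p)^{-1}(g_y)$. Since $T_\varphi(x) = x - (\nabla\xi_p)^{-1}(\nabla\varphi(x))$, and since both $\nabla\varphi(x)$ and $g_y$ lie in $\partial\varphi(B(x,\eta))$ whenever $\nr{x-y}\leq\eta$, the $\tfrac{1}{p-1}$-Hölder estimate for $(\nabla\xi_p)^{-1}$ from Lemma~\ref{lemma:pties-p-cost} yields
\[
\nr{T_\varphi(x)-z'} \leq \eta + \tfrac{3}{p^{1/(p-1)}}\,\diam(\partial\varphi(B(x,\eta)))^{1/(p-1)}.
\]

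The remainder is a Markov truncation. Split $\sigma$ along $A_\eta = \{(x,y):\nr{x-y}\leq\eta\}$. On $A_\eta^c$ we bound $\nr{T_\varphi(x)-z'}\leq 2R$ and apply Markov's inequality to get $\pi(A_\eta^c)\leq \Wass_r^r(\rho,\tilde{\rho})/\eta^r$. On $A_\eta$, the pointwise bound together with $\rho$ having density at most $M_\rho$ reduces the task to controlling $\int_\Omega \diam(\partial\varphi(B(x,\eta)))^{q/(p-1)}\,\d x$, for which the hypothesis $q>p-1$ ensures $q/(p-1)>1$, the admissible regime of Corollary~\ref{cor:estimate-grad-phi}. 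Since $\varphi$ is only $\lambda$-concave with $\lambda = p(p-1)R^{p-2}$, I would first write $\varphi = \tfrac{\lambda}{2}\nr{\cdot}^2 - \Phi$ with $\Phi$ convex and $\Lip(\Phi)\lesssim p^2 R^{p-1}$ (using $\Lip(\varphi)\leq pR^{p-1}$ from Lemma~\ref{lemma:pties-p-cost}), use $\diam(\partial\varphi(B(x,\eta)))\leq 2\lambda\eta + \diam(\partial\Phi(B(x,\eta)))$, and apply Corollary~\ref{cor:estimate-grad-phi} to a convex extension of $\Phi$ to $\Rsp^d$. This produces a bound of the form
\[
\Wass_q^q((T_\varphi)_\#\rho,(p_2)_\#\tilde{\gamma}) \lesssim \eta + \Wass_r^r(\rho,\tilde{\rho})/\eta^r,
\]
and the stated exponent $r/(q(r+1))$ comes out by optimizing at $\eta = \Wass_r^{r/(r+1)}(\rho,\tilde{\rho})$. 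For the $\Wass_\infty$ variant, choosing $\pi$ to be an $\infty$-optimal coupling makes $A_\eta^c$ empty at $\eta = \Wass_\infty(\rho,\tilde{\rho})$, so only the good part survives and delivers $\Wass_q^q\lesssim\Wass_\infty$.

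The main technical obstacle I anticipate is the careful translation of $\spt(\tilde{\gamma})\subset\partial^c\varphi$ into the usable representation $z' = y - (\nabla\xi_p)^{-1}(g_y)$ with $g_y$ belonging to $\partial\varphi(B(x,\eta))$, together with the clean reduction from the semiconcave $\varphi$ to the convex $\Phi$ so that Corollary~\ref{cor:estimate-grad-phi} applies with explicit constants. Once this bookkeeping and the pointwise Hölder bound are in place, the Markov-plus-optimization step is entirely routine and the tracking of the constant $c_{d,q,p,R,M_\rho}$ is mechanical.
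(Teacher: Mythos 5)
Your proposal is correct and follows essentially the same route as the paper: disintegrate $\tilde{\gamma}$ over $\tilde{\rho}$, build a coupling between $(T_\varphi)_\#\rho$ and $(p_2)_\#\tilde{\gamma}$ through an optimal $\Wass_r$ coupling of $\rho$ and $\tilde{\rho}$, prove a pointwise H\"older bound near the diagonal via the regularity of $(\nabla\xi_p)^{-1}$ and the convex function $\Phi = \tfrac{\lambda}{2}\nr{\cdot}^2 - \varphi$ (the paper's $\phi$ from Lemma~\ref{lemma:link-potential-convex-function}), apply Corollary~\ref{cor:estimate-grad-phi}, Markov truncate off-diagonal, and optimize in $\eta$. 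The one genuine variation is cosmetic but worth noting: you use a general optimal coupling $\pi\in\Gamma(\rho,\tilde{\rho})$, whereas the paper disintegrates through an optimal transport \emph{map} $S$ and writes the coupling as $\gamma=\int\delta_x\otimes\tilde{\gamma}_{S(x)}\,\d\rho(x)$; with your formulation you do not need to invoke the existence of an $\infty$-optimal transport map (for which the paper cites Champion--De Pascale), so the $r=\infty$ case is handled more cleanly. Beyond that, your reduction $\diam(\partial^+\varphi(B(x,\eta)))\le 2\lambda\eta+\diam(\partial\Phi(B(x,\eta)))$ and the representation $z'=y-(\nabla\xi_p)^{-1}(g_y)$ reproduce exactly the content of Lemma~\ref{lemma:link-potential-convex-function}, just stated in terms of the superdifferential of $\varphi$ rather than the $c$-subdifferential diameter; the two bookkeeping choices are equivalent and lead to the same exponent $r/(q(r+1))$ and the same use of $q>p-1$ to make $q/(p-1)>1$ admissible in Corollary~\ref{cor:estimate-grad-phi}.
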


%\begin{proposition}
%    Let $\rho, \tilde{\rho} \in \Prob(\Omega)$ where $\Omega$ is a ball of $\Rsp^d$ with radius $R$, and assume that $\rho$ is absolutely continuous with density bounded above by $M_\rho \in (0, +\infty)$. Let $\phi \in \Class(\Omega)$ be a convex $R$-Lipschitz function. Let $\tilde{\gamma} \in \Prob(\Omega \times \Omega)$ be such that $(p_1)_\# \tilde{\gamma} = \tilde{\rho}$ and assume that $\tilde{\gamma}$ is concentrated on $$\partial \phi = \{ (x,y) \in\Omega\times \Omega \mid \phi(x) + \phi^*(y) = \sca{x}{y} \}.$$
%     Then
%    $$ \Wass_2((\nabla \phi)_\# \rho, (p_2)_\# \tilde{\gamma} ) \leq C_{d, R, M_\rho} \Wass_2(\rho, \tilde{\rho})^{1/4}, $$
%    where $C_{d, R, M_\rho} = C_d(1 + M_\rho)(1 + R)^{d+1}$, with $C_d$ a constant that depends only on $d$.
%\end{proposition}
%$C(d, R, \rho) = 1 + 4R^2 + 2^{3d+7} d^2 R^2 (R+ 4(2R)^{3/4})^d \omega_d M_\rho$. 

\begin{remark}[Case of $\varphi \in\Class^1$] 
Whenever $\varphi$ is differentiable $\tilde{\rho}$-almost-everywhere, Theorem~\ref{th:stability-pushforwards} ensures for all $q>p-1$ and $r>1$ the following stability result for the pushforward operation by $T_\varphi$:
$$ \Wass_q((T_\varphi)_\# \rho, (T_\varphi)_\# \tilde{\rho} ) \leq c_{d, q, p, R, M_\rho} \Wass_r(\rho, \tilde{\rho})^{\frac{r}{q(r+1)}}. $$
%However, we need the complete statement of Theorem~\ref{th:stability-pushforwards} because in the application of this proposition to the stability of barycenters in a statistical or computational context, the measure $\tilde{\rho}$ may be finitely supported and concentrated on the non-differentiability set of $\phi$.
\end{remark}

\begin{remark}[Case of $\varphi \in\Class^{1,\alpha}$] 
If the potential $\varphi$ was regular in the previous proposition, e.g. $\varphi \in \Class^{1,\alpha}(\Omega)$, one would trivially get an estimate of the form
$$ \Wass_q((T_\varphi)_\# \rho, (T_\varphi)_\# \tilde{\rho} )  \leq C\Wass_r(\rho,\tilde{\rho})^{\frac{\alpha }{p-1}},$$
relying on Lemma~\ref{lemma:pties-p-cost} and for any $q, r \geq 1$ that are such that $r \geq \frac{\alpha q }{p-1}$. 
However, as noticed in the introduction, even for $p=2$, getting regularity estimates for optimal transport potentials  requires to make strong regularity assumptions on the involved measures which are rarely satisfied in applications. When $p\neq 2$, the situation is even worse since the cost fails to satisfy the so-called Ma-Trudinger-Wang condition which, as shown in Theorem 3.1 in \cite{loeper2009regularity}, is in fact necessary for the $C^1$ regularity of optimal potentials . 
\end{remark}

\begin{remark}[Tightness of exponents] 
    The estimate of Theorem~\ref{th:stability-pushforwards} is tight in terms of exponents. 
    This follows from the following generalization of Example~\ref{ex:holder-behavior} 
    (Figure~\ref{fig:holder-behavior}). 
    In dimension $d=1$, consider on $\Omega = [-1, 1]$ the probability measures 
    $\rho = \lambda_{[-\frac{1}{2}, \frac{1}{2}]}$ and 
    $\rho^\eps = \lambda_{[-\frac{1}{2}, -\frac{\eps}{2}] \cup [\frac{\eps}{2}, \frac{1}{2}]} + \eps \delta_0$ 
    where $\eps \in (0, \frac{1}{2})$ and $\lambda_I$ denotes the  Lebesgue measure restricted 
    to a set $I$. For a given $p \geq 2$, define on $\Omega$ the potential 
    $\varphi : x \mapsto (1- \abs{x})^p$. This potential satisfies 
    $\varphi = (\varphi^c)^{\bar{c}}$ where $c$ is the $p$-cost. 
    Introduce $T_\varphi$ the associated optimal transport map, which satisfies 
    $T_\varphi(x) = \sign(x)$, and $\gamma^\eps \in \Prob(\Omega \times \Omega)$ defined with 
    $\gamma^\eps = \int_{[-\frac{1}{2}, -\frac{\eps}{2}] \cup [\frac{\eps}{2}, \frac{1}{2}]} \delta_x \otimes \delta_{T_\varphi (x)} \dd x + \eps \delta_{(0,1)}$. 
    Then $(p_1)_\# \gamma^\eps = \rho^\eps$, $\spt(\gamma^\eps) \subset \partial^c \varphi$. 
    One then has $(T_\varphi)_\# \rho = \frac{1}{2}(\delta_{-1} + \delta_{+1})$ and $(p_2)_\# \gamma^\eps = \frac{1-\eps}{2} \delta_{-1} + \frac{1+\eps}{2} \delta_{+1}$. Thus for any $q \geq 1$, $\Wass_q((T_\varphi)_\# \rho, (p_2)_\# \gamma^\eps) \sim \eps^{1/q}$, while for any $r \geq 1$ one easily has $\Wass_r(\rho, \rho^\eps) \sim \eps^{\frac{r+1}{r}}$, that is
$$ \Wass_q((T_\varphi)_\# \rho, (p_2)_\# \gamma^\eps) \sim \Wass_r(\rho, \rho^\eps)^{\frac{r}{q(r+1)}} .$$
\end{remark}

\begin{remark}[Comparison with stochastic approximations]
    The tight estimates of Theorem~\ref{th:stability-pushforwards} tend to indicate that, in dimension $d \geq 2$, the stochastic approximations of the measure $(T_\varphi)_\# \rho$ from this theorem converge more rapidly than the deterministic approximations built from $\rho$. Indeed, given a budget of $N \geq 1$ points, one can build an approximation $\tilde{\rho}_N \in \Prob(\Omega)$ of $\rho \in \Prob(\Omega)$ supported on a grid of $N$ points and that satisfies
    $$ \Wass_\infty(\rho, \tilde{\rho}_N) \lesssim N^{-1/d}.$$
    The bound in Theorem~\ref{th:stability-pushforwards} then ensures formally for any $q \geq p-1$:
    $$ \Wass_q^q( (T_\varphi)_\# \rho, (T_\varphi)_\# \tilde{\rho}_N ) \lesssim N^{-1/d}. $$
    Meanwhile, if one samples $N$ points $(x_i)_{1 \leq i \leq N}$ from $\rho$ and denotes $\hat{\rho}_N = \frac{1}{N} \sum_{i=1}^N \delta_{x_i}$ the corresponding empirical measure, Theorem 1 of \cite{Fournier2015} ensures:
    \begin{equation*}
    \Esp \Wass_q^q((T_\varphi)_\# \rho, (T_\varphi)_\# \hat{\rho}_N) \lesssim \left\{
    \begin{array}{ll}
        N^{-1/2} & \mbox{if } d < 2q, \\
        N^{-1/2} \log(1 + N) & \mbox{if } d = 2 q, \\
        N^{-q/d} & \mbox{else.}
    \end{array}
\right.
\end{equation*}
In particular, except in dimension one, the stochastic approximation $(T_\varphi)_\# \hat{\rho}_N$ converges faster (in expectation) towards $(T_\varphi)_\# \rho$ than its deterministic counterpart $(T_\varphi)_\# \tilde{\rho}_N$.
\end{remark}

%\begin{remark}[Computation of generalized geodesics] In the quadratic setting $p=2$, assume that one has computed the quadratic optimal transport maps $T_0 = \id - \frac{1}{2}\nabla \varphi_0$ and $T_1 = \id - \frac{1}{2}\nabla \varphi_1$ between a fixed density $\rho$ and two measures $\mu_0,\mu_1$ supported on $\Omega$. The generalized geodesic \cite{ambrosio2008gradient} between $\mu_0$ and $\mu_1$ with base $\rho$ is defined as $\mu_t = T_{t\#}\rho$ with $T_t = (1-t) T_0 + tT_1 = \id - \frac{1}{2} \nabla \varphi_t$. This notion has found many applications: it constitutes for instance the natural interpolant in the \emph{Linearized Optimal Transport} framework of \cite{LOT_ref_image} considered for image analysis. Numerically, the pushforward measure $\mu_t$ may be difficult to compute. However, assume  $\tilde{\rho} = \frac{1}{N}\sum_i \delta_{x_i}$ is a finitely supported approximation of $\rho$. For any $t\in[0,1]$, choose as $y_i(t)$ any element of the subdifferential $\partial ( \frac{\nr{\cdot}^2}{2} - \varphi_t )(x_i)$, and set $\tilde{\mu}_t = \frac{1}{N} \sum_i \delta_{y_i(t)}$. This measure $\tilde{\mu}_t$ is very easy to compute in practice (one only needs to compute elements of the subdifferential of $\frac{\nr{\cdot}^2}{2} - \varphi_t $), and Theorem~\ref{th:stability-pushforwards} ensures that it is a (rather) good approximation of $\mu_t$:
%$$ \Wass_2(\mu_t, \tilde{\mu}_t) \leq C\Wass_2(\rho,\tilde{\rho})^{1/3}. $$
%\end{remark} \todo{déplacer dans section numérique dédiée ?}

The proof of Theorem~\ref{th:stability-pushforwards} relies on the following lemma, that is a direct consequence of Lemma~\ref{lemma:pties-p-cost} and whose proof is deferred after the proof of Theorem~\ref{th:stability-pushforwards}. %to the appendix.
\begin{lemma} \label{lemma:link-potential-convex-function}
    With the notations of Theorem~\ref{th:stability-pushforwards}, the function $\phi : x \mapsto p(p-1) R^{p-2}\frac{\nr{x}^2}{2}  - \varphi(x)$
is convex and $p^2 R^{p-1}$-Lipschitz continuous on $\Omega$. This function can be extended to a convex and $p^2 R^{p-1}$-Lipschitz continuous function defined on $\Rsp^d$. Moreover, for any $x \in \Omega$ and $\eta > 0$, 
$$ \diam( \partial^c \varphi(B(x, \eta) \cap \Omega) ) \leq 8 p (1+ R^{\frac{p-2}{p-1}}) \left( \eta^{\frac{1}{p-1}} + \diam( \partial \phi(B(x, \eta)) )^{\frac{1}{p-1}} \right).$$
\end{lemma}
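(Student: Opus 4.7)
The plan is to handle the three claims in order, with the diameter bound being the substantive one.

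\textbf{Convexity and Lipschitz continuity of $\phi$.} Using $\varphi = (\varphi^c)^{\bar c}$, I rewrite
$$\phi(x) = \tfrac{p(p-1)R^{p-2}}{2}\|x\|^2 - \varphi(x) = \sup_{y \in \Omega}\left\{\tfrac{p(p-1)R^{p-2}}{2}\|x\|^2 - \xi_p(x-y) + \varphi^c(y)\right\}.$$
By Lemma~\ref{lemma:pties-p-cost}, $\xi_p$ is $(-p(p-1)R^{p-2})$-concave, so each slice inside the supremum is convex in $x$ and $\phi$ inherits convexity. For the Lipschitz bound, $\xi_p$ (hence $\varphi$ via the $\bar c$-transform) is $pR^{p-1}$-Lipschitz, while $x \mapsto \tfrac{p(p-1)R^{p-2}}{2}\|x\|^2$ is $p(p-1)R^{p-1}$-Lipschitz on $\Omega$, summing to $p^2R^{p-1}$. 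Extending $\phi$ from $\Omega$ to $\Rsp^d$ preserving convexity and the Lipschitz constant is standard (e.g.\ via inf-convolution with a sufficiently steep cone).

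\textbf{The correspondence $\partial^c \varphi \leftrightarrow \partial \phi$.} The relation $y \in \partial^c\varphi(x')$ means that $x'$ minimizes $x \mapsto \xi_p(x-y) - \varphi(x)$. Rewriting $-\varphi = \phi - \tfrac{p(p-1)R^{p-2}}{2}\|\cdot\|^2$, the objective is the sum of a $C^2$ function of $x$ and the convex function $\phi$, so the first-order condition at the minimum reads $0 \in \nabla_x \xi_p(x'-y) - p(p-1)R^{p-2}\,x' + \partial\phi(x')$, yielding the key formula
$$y = x' - (\nabla\xi_p)^{-1}\bigl(p(p-1)R^{p-2}\,x' - g\bigr), \qquad \text{for some } g \in \partial\phi(x').$$

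\textbf{Deriving the diameter bound.} Taking $y_1, y_2 \in \partial^c\varphi(B(x,\eta)\cap\Omega)$ produced by $x_i' \in B(x,\eta)\cap\Omega$ and $g_i \in \partial\phi(x_i')$, I set $z_i = p(p-1)R^{p-2}x_i' - g_i$ and estimate
$$\|y_1 - y_2\| \leq \|x_1' - x_2'\| + \|(\nabla\xi_p)^{-1}(z_1) - (\nabla\xi_p)^{-1}(z_2)\|.$$
The $\tfrac{1}{p-1}$-Hölder continuity of $(\nabla\xi_p)^{-1}$ from Lemma~\ref{lemma:pties-p-cost}, combined with $\|z_1 - z_2\| \leq 2p(p-1)R^{p-2}\eta + \diam(\partial\phi(B(x,\eta)))$ and the subadditivity $(a+b)^{1/(p-1)} \leq a^{1/(p-1)} + b^{1/(p-1)}$ (valid since $p \geq 2$), produces an estimate of the form $2\eta + CR^{(p-2)/(p-1)}\eta^{1/(p-1)} + C'\diam(\partial\phi(B(x,\eta)))^{1/(p-1)}$. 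A short case split on $\eta$ absorbs the leftover linear term: for $\eta \leq R$, $2\eta \leq 2R^{(p-2)/(p-1)}\eta^{1/(p-1)}$, while for $\eta > R$ the inclusion $y_1,y_2 \in \Omega$ gives $\|y_1-y_2\|\leq 2R$, already dominated by $8p R^{(p-2)/(p-1)}\eta^{1/(p-1)}$. This yields the announced constant. The main delicate point is writing the first-order condition correctly at $x'$ where $\varphi$ may fail to be differentiable; working with $\partial\phi$ in the convex sense rather than with a superdifferential of the semiconcave $\varphi$ makes the subdifferential calculus entirely transparent.
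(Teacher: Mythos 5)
Your proposal is correct and follows essentially the same path as the paper's proof: you establish convexity and the Lipschitz bound from the $(-p(p-1)R^{p-2})$-concavity and $pR^{p-1}$-Lipschitz continuity of $\varphi$ inherited from $\xi_p$, and then derive the diameter bound from the Fermat rule $y = x' - (\nabla\xi_p)^{-1}(p(p-1)R^{p-2}x' - g)$ with $g\in\partial\phi(x')$, the $\tfrac{1}{p-1}$-H\"older continuity of $(\nabla\xi_p)^{-1}$, and subadditivity of $t\mapsto t^{1/(p-1)}$. The only cosmetic difference is that you make the absorption of the linear $2\eta$ term explicit via a case split on $\eta\lessgtr R$, whereas the paper invokes $\eta\leq R$ directly; both yield a constant dominated by $8p(1+R^{(p-2)/(p-1)})$.
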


%\begin{proposition}
%%\label{prop:stability-pushforwards}
%    Let $\rho, \tilde{\rho}, \tilde{\mu} \in \Prob(\Omega)$ and assume that $\rho$ is absolutely continuous and %that its density is bounded above by $M_\rho \in (0, +\infty)$. Denote $\tilde{\psi} \in \Class(\Omega)$ an %optimal potential for the quadratic optimal transport problem between $\tilde{\rho}$ and $\tilde{\mu}$:
%    $$\tilde{\psi} \in \arg \min_{\psi \in \Class(\Omega)} \sca{\psi^*}{\tilde{\rho}} + %\sca{\psi}{\tilde{\mu}}.$$
%     Then
%    $$ \Wass_2((\nabla \tilde{\psi}^*)_\# \rho, \tilde{\mu}) \leq C_{d, R, M_\rho} \Wass_2(\rho, %\tilde{\rho})^{1/4}, $$
%    where $C_{d, R, M_\rho} = C_d(1 + M_\rho)(1 + R)^{d+1}$, with $C_d$ a constant that depends only on $d$.
%\end{proposition}

We are now ready to prove Theorem~\ref{th:stability-pushforwards}.
\begin{proof}[Proof of Theorem~\ref{th:stability-pushforwards}]
%Let $\tilde{\gamma} \in \Pi(\tilde{\rho}, \tilde{\mu})$ be an optimal transport plan for the quadratic optimal transport problem between $\tilde{\rho}$ and $\tilde{\mu}$. Then for any $(x,y) \in \spt(\tilde{\gamma})$, one has by duality
%\begin{equation*}
%    \tilde{\psi}^*(x) + \tilde{\psi}(y) = \sca{x}{y},
%\end{equation*}
%which also entails $y \in \partial \tilde{\psi}^*(x)$. Denote $\tilde{\gamma} = \tilde{\gamma}_x \otimes \tilde{\rho}$ the disintegration of $\tilde{\gamma}$ with respect to $\tilde{\rho}$. Notice that if $y \in \spt(\tilde{\gamma}_x)$ then $y \in \partial \tilde{\psi}^*(x)$. %: for any $\varphi : \Omega \times \Omega \to \Rsp$,
%\begin{equation*}
    %\int_{\Omega \times \Omega} \varphi(x,y) \dd \tilde{\gamma}(x,y) = \int_\Omega \dd \tilde{\rho}(x) \int_\Omega \varphi(x, y) \dd \tilde{\gamma}_x(y).
%\end{equation*}
In this proof, we omit for clarity the multiplicative constants that depend on $d, q, p, R$ or $M_\rho$ and use $\lesssim$ instead of $\leq$ for inequalities involving such constants. A close look at this proof allows to recover the multiplicative constant of the statement. Let us assume for now that $r \in (1, \infty) \cup \{\infty\}$. We will deal with each of the distinct cases $r = \infty$ and $r < \infty$ afterwards.

We first disintegrate $\tilde{\gamma}$ with respect to $\tilde{\rho}$, i.e. we let $\tilde{\gamma} = \int \delta_x\otimes \tilde{\gamma}_x \dd\tilde{\rho}$, where $x \mapsto \tilde{\gamma}_x$ is a measurable map from $\Omega$ to $\Prob(\Omega)$. By assumption, the support of $\tilde{\gamma}$ is included in $\partial^c\varphi$. This implies that for any $x$ in $\Omega$, the support of $\tilde{\gamma}_x$ is included in $\partial^c \varphi(x) = \{y \in \Omega \mid \varphi(x) + \varphi^c(y) = c(x,y)\}$. We introduce $S:\Rsp^d \to \Rsp^d$ an optimal transport map from $\rho$ to $\tilde{\rho}$ for the $r$-cost\footnote{For $r=\infty$, the existence of an optimal transport map was first established in \cite{champion08}.}
and we consider the measure $\gamma = \int \delta_x \otimes \tilde{\gamma}_{S(x)} \dd \rho(x)$. This measure $\gamma$ is a coupling between $\rho$ and $(p_2)_\# \tilde{\gamma}$, which implies that  $(T_\varphi, \id)_\# \gamma$ is a coupling between $(T_\varphi)_\# \rho$ and $(p_2)_\# \tilde{\gamma}$. These constructions may be summarized by the following diagram.

\begin{center}
  \begin{tikzcd}[row sep=4em, column sep=7em]
    \rho \arrow{r}{S}\arrow{d}{T_\varphi} \arrow{dr}{\gamma} & \tilde{\rho} \arrow{d}{\tilde{\gamma}} \\
     (T_\varphi)_{\#} \rho \arrow{r}{(T_\varphi,\id)_{\#}\gamma}  & p_{2\#} \tilde{\gamma}
  \end{tikzcd}
\end{center}
We therefore have the bound:
\begin{align}
\label{eq:upper-bound-1-pushforwards}
    \Wass_q^q((T_\varphi)_\# \rho, (p_2)_\# \tilde{\gamma}) &\leq \int_{\Omega \times \Omega} \nr{T_\varphi(x) - y}^q \dd \gamma(x,y) \notag \\
    &=  \int_{\Omega \times \Omega} \nr{T_\varphi(x) - y}^q \dd \tilde{\gamma}_{S(x)}(y) \dd \rho(x) \notag \\
    &= \int_{x \in \Omega} \int_{y \in \partial^c \varphi(S(x))} \nr{T_\varphi(x) - y}^q \dd \tilde{\gamma}_{S(x)}(y) \dd \rho(x),
\end{align}
where we used that $\spt(\tilde{\gamma}_{S(x)}) \subseteq \partial^c \varphi(S(x))$  to get the last line. For a given $\eta \in (0, 2 R + 1]$, we will upper bound the right-hand side 
by splitting the integral on $\Omega_\eta$ and $\Omega_\eta^c$, where 
$$ \Omega_\eta = \left\{x \in \spt(\rho) \vert \nr{S(x) - x} \leq \eta \right\}, \qquad \Omega_\eta^c = \spt(\rho) \setminus \Omega_\eta. $$

%\quentin{ou est-ce qu'on utilise  $\eta \in (0, (2R)^{3/4}]$?}\textcolor{blue}{Quand on écrit $$N_{4\eta}^{pack}(\X_\alpha) \lesssim \eta^{1 - d - \alpha} \int_{\partial(\Omega+B(0,4\eta))} \sca{\nabla \phi(u)}{n_u} \dd u \lesssim \eta^{1 - d - \alpha},$$ on utilise $\int_{\partial(\Omega+B(0,4\eta))} \sca{\nabla \phi(u)}{n_u} \dd u \leq R (R + 4\eta)^{d-1} s_{d-1}$, qu'on peut majorer par quelque que chose qui ne dépend que de $R$ et $d$ seulement si on impose $\eta \leq f(R,d)$}

\noindent\emph{\textbf{Upper bound on $\Omega_\eta$.}}
By definition, any point $x$ in  $\Omega_\eta$ satisfies $\nr{S(x) - x} \leq \eta $, so that $S(x)$ belongs to the ball $B(x, \eta)$ intersected with $\Omega$. %\footnote{note that $S(x)$ actually belongs to $B(x, \eta) \cap \Omega$, and this should be taken into account to avoid boundary issues. However, the size of the set of points $x$ that are such that $B(x, \eta) \cap \Omega^c \neq \emptyset$ scales linearly in $\eta$, to that explicitly dealing with those points that lie near the border of $\Omega$ would not change the presented bounds.}.
 Then for any such $x$, $\partial^c \varphi(S(x)) \subset \partial^c \varphi(B(x, \eta) \cap \Omega)$. Therefore for any $g \in \partial^c \varphi(x)$ and $y \in \partial^c \varphi(S(x))$, one has
$$ \nr{g - y} \leq \diam \left(\partial^c \varphi(B(x, \eta) \cap \Omega)\right), $$
%and we may hope to bound this distance thanks to the Lemma~\ref{lemma:bound-diam-grad-l1-norm-grad}. We first note that from the above upper bound, we have
so that, recalling that $T_\varphi(x) \in \partial^c \varphi(x)$, the quantity $$\int_{x \in \Omega_\eta} \int_{y \in \partial^c \varphi(S(x))} \nr{T_\varphi(x) - y}^q \dd \tilde{\gamma}_{S(x)}(y) \dd \rho(x)$$ is dominated by%we have the bound
\begin{align*}
%\label{eq:first-bound-markov-2}
    \int_{x \in \Omega_\eta} \diam \left(\partial^c \varphi(B(x, \eta) \cap \Omega)\right)^q \dd \rho(x).
\end{align*}
Let $\phi$ be the convex and $p^2 R^{p-1}$-Lipschitz function on $\Omega$ defined from $\varphi$ in Lemma~\ref{lemma:link-potential-convex-function}. This lemma ensures that:
$$\diam( \partial^c \varphi(B(x, \eta) \cap \Omega) ) \lesssim  \eta^{\frac{1}{p-1}} + \diam( \partial \phi(B(x, \eta)) )^{\frac{1}{p-1}}.$$
We thus have the estimate:
\begin{equation*}
    \int_{x \in \Omega_\eta} \diam \left(\partial^c \varphi(B(x, \eta) \cap \Omega)\right)^q \dd \rho(x) \lesssim \eta^{\frac{q}{p-1}} + \int_\Omega \diam( \partial \phi(B(x, \eta)) )^{\frac{q}{p-1}} \dd \rho(x).
\end{equation*}
Using that $\frac{q}{p-1} > 1$ and that $\eta \leq 2 R + 1$, one has
\begin{equation*}
    \eta^{\frac{q}{p-1}} = (2 R + 1)^{\frac{q}{p-1}} \left( \frac{\eta}{2 R + 1}\right)^{\frac{q}{p-1}} \lesssim \eta.
\end{equation*}
Using again that $\frac{q}{p-1} > 1$, Corollary~\ref{cor:estimate-grad-phi} ensures the bound:
\begin{equation*}
    \int_{\Omega} \diam(\partial \phi(B(x, \eta)))^{ \frac{q}{p-1}} \dd \rho(x) \lesssim \eta,
\end{equation*}
The last two bounds thus entail
\begin{equation*}
    \int_{x \in \Omega_\eta} \diam \left(\partial^c \varphi(B(x, \eta) \cap \Omega)\right)^q \dd \rho(x) \lesssim \eta.
\end{equation*}
We therefore have the bound:
\begin{equation}
\label{eq:second-bound-markov}
    \int_{x \in \Omega_\eta} \int_{y \in \partial^c \varphi(S(x))} \nr{T_\varphi(x) - y}^q \dd \tilde{\gamma}_{S(x)}(y) \dd \rho(x) \lesssim \eta.
\end{equation}
This last bound allows to deal with the case $r= \infty$. Indeed, assuming that $r = \infty$, we get by setting $\eta = \Wass_\infty(\rho, \tilde{\rho})$ that $\Omega_\eta = \Omega$, $\Omega_\eta^c = \emptyset$, and the previous inequality combined with \eqref{eq:upper-bound-1-pushforwards} allows to reach the conclusion that
\begin{equation*}
    \Wass_q^q((T_\varphi)_\# \rho, (p_2)_\# \tilde{\gamma}) \lesssim \Wass_\infty(\rho, \tilde{\rho}).
\end{equation*}
We now assume that $r \in (1 , + \infty)$. There remains to bound the value of the integrand in \eqref{eq:upper-bound-1-pushforwards} on the domain $\Omega_\eta^c$.

\noindent\emph{\textbf{Upper bound on $\Omega_\eta^c$.}} The optimal transport map $S$ from $\rho$ to $\tilde{\rho}$ satisfies
$$ \nr{S - \id}_{\L^r(\rho)} = \Wass_r(\rho, \tilde{\rho}). $$
Then using Markov's inequality, $\eta^r \rho(\Omega_\eta^c) \leq  \Wass_r^r(\rho, \tilde{\rho})$. The fact that $T_\varphi$ is valued in $\Omega$ then implies
\begin{equation} \label{eq:first-bound-markov}
    \begin{aligned}
        \int_{x \in \Omega_\eta^c} \int_{y \in \partial^c \varphi(S(x))} \nr{T_\varphi(x) - y}^q \dd \tilde{\gamma}_{S(x)}(y) \dd \rho(x) &\leq \int_{x \in \Omega_\eta^c} (2 R)^q \dd \rho(x) \\
        &\lesssim \frac{\Wass_r^r(\rho, \tilde{\rho})}{\eta^r}.
    \end{aligned} 
\end{equation}
%Therefore,
%\begin{align}
%\label{eq:first-bound-markov}
%    \Wass_2^2((\nabla \phi)_\# \rho, \tilde{\mu}) \leq  \frac{4 R^2}{\eta^2} \Wass_2^2(\rho, \tilde{\rho}) + \int_{x \in \Omega_\eta} \int_{y \in \partial \phi(S(x))} \nr{\nabla \phi(x) - y}^2 \dd \tilde{\gamma}_{S(x)}(y) \dd \rho(x).
%\end{align}

\noindent\emph{\textbf{Conclusion.}} Using bounds \eqref{eq:second-bound-markov} and \eqref{eq:first-bound-markov} in \eqref{eq:upper-bound-1-pushforwards} we have for any $\eta \in (0, 2 R]$:
\begin{equation*}
    \Wass_q^q((T_\varphi)_\# \rho, (p_2)_\# \tilde{\gamma}) \lesssim \frac{\Wass_r^r(\rho, \tilde{\rho})}{\eta^r}  + \eta.
\end{equation*}
Setting $\eta = \Wass_r(\rho, \tilde{\rho})^{\frac{r}{r+1}}$ then allows us to conclude:
\begin{equation*}
    \Wass_q^q((T_\varphi)_\# \rho, (p_2)_\# \tilde{\gamma}) \lesssim \Wass_r(\rho, \tilde{\rho})^{\frac{r}{r+1}}. \qedhere
\end{equation*}

\end{proof}

We conclude this section with the proof of Lemma~\ref{lemma:link-potential-convex-function}.

\begin{proof}[Proof of Lemma~\ref{lemma:link-potential-convex-function}]
    From Lemma~\ref{lemma:pties-p-cost}, we know that the $p$-cost $\xi_p$ is a $-C_{p, R}$-concave function with $C_{p, R} = p(p-1) R^{p-2}$. Since $\varphi$ verifies $\varphi = (\varphi^c)^{\bar{c}}$, it is also a $-C_{p, R}$-concave function as an infimum of $-C_{p, R}$-concave functions. In particular, the function $\phi$ is a convex function. Similarly, Lemma~\ref{lemma:pties-p-cost} ensures that $\xi_p$ is $p R^{p-1}$-Lipschitz continuous on $\Omega$ and so is $\varphi = (\varphi^c)^{\bar{c}}$. An immediate computation then ensures that $\phi$ is $p^2 R^{p-1}$-Lipschitz continuous on $\Omega$. The $p^2 R^{p-1}$-Lipschitz continuous convex function $\phi$ defined on $\Omega$ can be extended, by mean of double convex conjugate, as a $p^2 R^{p-1}$-Lipschitz continuous convex function defined on the whole $\Rsp^d$ and coinciding with $\phi$ on $\Omega$:
\begin{equation*}
    \forall x \in \Rsp^d, \quad \phi(x) := \sup_{x_\Omega \in \Omega, g \in \partial \phi(x_\Omega)} \phi(x_\Omega) + \sca{g}{x - x_\Omega}. 
\end{equation*}
Now consider $x \in \Omega$ and $\eta > 0$. Let $x^-, x^+ \in B(x, \eta) \cap \Omega$.  Let $y^- \in \partial^c \varphi(x^-)$ and $y^+ \in \partial^c \varphi(x^+)$. We want to bound $\nr{y^+ - y^-}$ in terms of $\eta$ and $\diam( \partial \phi(B(x, \eta)) )$. Let's refer for now to $x^-, y^-$ and $x^+, y^-$ indistinctly with $x^\pm, y^\pm$. Recall that
$$ \partial^c \varphi(x^\pm) = \{ y \in \Omega \mid \varphi(x^\pm) + \varphi^c(y) = \xi_p(x^\pm - y) \}.$$
Therefore, $y^\pm \in \partial^c \varphi(x^\pm) $ if and only if $z \mapsto \xi_p(z - y^\pm) - \varphi(z)$ is minimized in $x^\pm$, that is if and only if
$$z \mapsto \xi_p(z - y^\pm) - C_{p, R} \frac{\nr{z}^2}{2} + \phi(z)$$
is minimized in $x^\pm$. This is possible only if there exists $g^\pm \in \partial \phi(x^\pm)$ such that
$$ 0 = \nabla \xi_p(x^\pm - y^\pm) - C_{p, R} x^\pm + g^\pm. $$
Hence there exists $g^\pm \in \partial \phi(x^\pm)$ such that
$$ y^\pm = x^\pm - (\nabla \xi_p)^{-1}(  C_{p, R} x^\pm - g^\pm). $$
Considering such subgradients $g^\pm \in \partial \phi(x^\pm)$, we thus have:
\begin{align*}
    \nr{y^+ - y^-} \leq \nr{x^+ - x^-} + \nr{ (\nabla \xi_p)^{-1}(  C_{p, R} x^+ - g^+) - (\nabla \xi_p)^{-1}(  C_{p, R} x^- - g^-) }.
\end{align*}
The Hölder behavior of $(\nabla \xi_p)^{-1}$ described in Lemma~\ref{lemma:pties-p-cost} then allows us to write:
\begin{align*}
     \nr{y^+ - y^-} \leq \nr{x^+ - x^-} + \frac{3}{p^{\frac{1}{p-1}}} \nr{  C_{p, R} (x^+ - x^-) - g^+  + g^- }^{\frac{1}{p-1}}.
\end{align*}
Therefore, using that $x^\pm \in B(x, \eta)$, we have that $\nr{g^+ - g^-} \leq \diam(\partial \phi(B(x, \eta)))$ so that we have the bound
\begin{equation*}
    \nr{y^+ - y^-} \leq 2 \eta + \frac{3}{p^{\frac{1}{p-1}}} (C_{p, R} \eta)^{\frac{1}{p-1}} + \frac{3}{p^{\frac{1}{p-1}}} \diam(\partial \phi(B(x, \eta)))^{\frac{1}{p-1}}.
\end{equation*}
Maximizing over $y^-$ and $y^+$ and using that $\eta \leq R$ leads to the bound:
\begin{equation*}
    \diam( \partial^c \varphi( B(x, \eta) \cap \Omega) ) \leq  8 p (1+ R^{\frac{p-2}{p-1}}) \left(\eta^{\frac{1}{p-1}} +  \diam(\partial \phi(B(x, \eta)))^{\frac{1}{p-1}} \right). \qedhere
\end{equation*}
\end{proof}

\medskip

\subsection*{Acknowledgement} 
The authors acknowledge the support of the Lagrange Mathematics and Computing Research Center and of the ANR (MAGA, ANR-16-CE40-0014). 

\medskip

\bibliographystyle{plain}
\bibliography{ref}

\appendix
\section{Omitted proofs} %Appendix}
\label{sec:appendix}
%\subsection{Proof of Lemma~\ref{lemma:bound-diam-grad-l1-norm-grad}}

\subsection{Proof of Lemma~\ref{lemma:pties-p-cost}}

\begin{proof}[Proof of Lemma~\ref{lemma:pties-p-cost}]
    The strict convexity of $\xi_p$ results from the triangle inequality and the strict convexity of $u \mapsto u^p$ on $\Rsp_+^*$ for $p\geq2$.
    
    Denoting $z_i$ the $i$-th coordinate of $z \in \Omega$ in the canonical basis of $\Rsp^d$, one has $\xi_p(z) = ( \sum_{i=1}^d z_i^2 )^{p/2}$. From this expression we deduce immediately that $\xi_p$ is of class $\Class^2$ and that its gradient and hessian read respectively
    \begin{gather*}
    \nabla \xi_p(z) = p z \nr{z}^{p-2} \quad \text{and} \quad \nabla^2 \xi_p(z) = p \nr{z}^{p-2} \id + p(p-2) \nr{z}^{p-4} z z^\top.
    \end{gather*}
    Thus for all $z \in \Omega$, $\nr{\nabla \xi_p(z)} \leq p R^{p-1}$ and $\xi_p$ is $p R^{p-1}$-Lipschitz continuous.
    
    For any $v \in \Rsp^d$, one has
    \begin{equation*}
        v^\top \nabla^2 \xi_p(z) v =  p \nr{z}^{p-2} \nr{v}^2 + p(p-2) \nr{z}^{p-4} \sca{v}{z}^2.
    \end{equation*}
    Cauchy-Schwartz inequality entails $\sca{v}{z}^2 \leq \nr{v}^2\nr{z}^2$, so that
    \begin{equation*}
        v^\top \nabla^2 \xi_p(z) v \leq p(p-1)\nr{z}^{p-2} \nr{v}^2.
    \end{equation*}
    For any $z \in \Omega$ we thus have the bound 
    \begin{equation*}
        0 \preceq \nabla^2 \xi_p(z) \preceq p(p-1) R^{p-2},
    \end{equation*}
    from which we deduce that $z \mapsto \xi_p(z) - p(p-1) \frac{R^{p-2}}{2} \nr{z}^2$ is a concave function.
    
    Finally, the mapping $z \mapsto \nabla \xi_p(z) = p z \nr{z}^{p-2}$ is obviously bijective on $\Rsp^d$. For any $y, z \in \Rsp^d \setminus \{0\}$ such that $\nabla \xi_p(y) = z$, one has 
    $$ z = p y \nr{y}^{p-2},$$
    so that $\nr{z} = p \nr{y}^{p-1}$. From this fact one deduces $$y = (\nabla \xi_p)^{-1}(z) = \frac{1}{p^{1/(p-1)}} \frac{z}{\nr{z}^\frac{p-2}{p-1}} = \frac{1}{p^{\beta(p)} \nr{z}^{1 - \beta(p)} } z,$$
    where $\beta(p) = \frac{1}{p-1} \in (0, 1]$ for $p \geq 2$.
    
    Let's finally show the Hölder behavior of $(\nabla \xi_p)^{-1}$. Let $x, y \in \Rsp^d$. If $x = 0$ and $y \neq 0$, then
    \begin{equation*}
    \nr{(\nabla \xi_p)^{-1}(y) - (\nabla \xi_p)^{-1}(x)} = \nr{(\nabla \xi_p)^{-1}(y) } = \frac{1}{p^{\beta(p)} } \nr{y}^{\beta(p)},
\end{equation*}
which corresponds to a $\beta(p)$-Hölder behavior of $\xi_p$ near 0. Assume now that $x \neq 0$ and $y \neq 0$. Assume for now that $x$ and $y$ are positively linearly dependent, i.e. there exists $\lambda \geq 1$ such that $y = \lambda x$. Then:
\begin{equation*}
    \nr{(\nabla \xi_p)^{-1}(y) - (\nabla \xi_p)^{-1}(x)} = \frac{1}{p^{\beta(p)} }(\lambda^{\beta(p)} - 1) \nr{x}^{\beta(p)}.
\end{equation*}
Using that for any $u>0$ and $\beta \in (0, 1]$, $(1+u)^\beta -1 \leq u^\beta$, we have $\lambda^{\beta(p)} - 1 \leq (\lambda - 1)^{\beta(p)}$. Hence we deduce:
\begin{equation}
\label{eq:holder-bound-same-direction}
    \nr{(\nabla \xi_p)^{-1}(y) - (\nabla \xi_p)^{-1}(x)} \leq \frac{1}{p^{\beta(p)} }(\lambda - 1)^{\beta(p)} \nr{x}^{\beta(p)} = \frac{1}{p^{\beta(p)} } \nr{y - x}^{\beta(p)}.
\end{equation}
Assume now that $\nr{x} = \nr{y}$. Then:
\begin{align}
    \nr{(\nabla \xi_p)^{-1}(y) - (\nabla \xi_p)^{-1}(x)} &= \frac{1}{p^{\beta(p)} \nr{x}^{1 - \beta(p)} } \nr{y - x} \notag \\
    &= \frac{ \nr{y - x}^{1-\beta(p)} }{p^{\beta(p)} \nr{x}^{1 - \beta(p)} } \nr{y - x}^{\beta(p)} \notag \\
    &\leq \frac{ (\nr{x} + \nr{y})^{1-\beta(p)} }{p^{\beta(p)} \nr{x}^{1 - \beta(p)} } \nr{y - x}^{\beta(p)} \notag \\
    &= \frac{2^{1 - \beta(p)}}{p^{\beta(p)}} \nr{y - x}^{\beta(p)}.
    \label{eq:holder-bound-same-norm}
\end{align}
Finally, without making any assumption on $x$ and $y$, we have:
\begin{align*}
    \nr{(\nabla \xi_p)^{-1}(y) - (\nabla \xi_p)^{-1}(x)} &\leq \nr{(\nabla \xi_p)^{-1}(y) - (\nabla \xi_p)^{-1}(\frac{\nr{x}}{\nr{y}} y) } \\
    &\quad + \nr{(\nabla \xi_p)^{-1}(\frac{\nr{x}}{\nr{y}} y) - (\nabla \xi_p)^{-1}(x)}.
\end{align*}
Bound \eqref{eq:holder-bound-same-direction} ensures:
\begin{align*}
    \nr{(\nabla \xi_p)^{-1}(y) - (\nabla \xi_p)^{-1}(\frac{\nr{x}}{\nr{y}} y) } &\leq \frac{1}{p^{\beta(p)} } \nr{ y - \frac{\nr{x}}{\nr{y}} y }^{\beta(p)} \\
    &= \frac{1}{p^{\beta(p)} }  \abs{ \nr{y} - \nr{x} }^{\beta(p)} \\
    &\leq \frac{1}{p^{\beta(p)} } \nr{y-x}^{\beta(p)}.
\end{align*}
On the other hand, bound \eqref{eq:holder-bound-same-norm} ensures:
\begin{align*}
    \nr{(\nabla \xi_p)^{-1}(\frac{\nr{x}}{\nr{y}} y) - (\nabla \xi_p)^{-1}(x)} &\leq \frac{2^{1 - \beta(p)}}{p^{\beta(p)}} \nr{\frac{\nr{x}}{\nr{y}}y - x}^{\beta(p)} \\
    &\leq \frac{2^{1 - \beta(p)}}{p^{\beta(p)}} \left( \nr{\frac{\nr{x}}{\nr{y}}y - y} + \nr{y - x}\right)^{\beta(p)} \\
    &= \frac{2^{1 - \beta(p)}}{p^{\beta(p)}} \left( \abs{\nr{x} - \nr{y}} + \nr{y - x}\right)^{\beta(p)} \\ 
    &\leq \frac{2}{p^{\beta(p)}} \nr{y - x}^{\beta(p)}.
\end{align*}
We thus get eventually:
\begin{equation*}
    \nr{(\nabla \xi_p)^{-1}(y) - (\nabla \xi_p)^{-1}(x)} \leq \frac{3}{p^{\beta(p)}} \nr{y - x}^{\beta(p)}. \qedhere
\end{equation*}
\end{proof}

%\subsection{Proof of Lemma~\ref{lemma:link-potential-convex-function}}

\end{document}